\def\beq{\begin{equation}}
\def\eeq{\end{equation}}
\def\dist{{\rm dist}}
\def\bh{\textbf{h}}
\def\bJ{\mathbf{J}}
\def\bN{\mathbf{N}}
\def\bR{\mathbf{R}}
\def\bf{\textbf{}}
\def\bh{\mathbf{h}}
\def\br{\mathbf{r}}
\def\bs{\mathbf{s}}
\def\bx{\mathbf{x}}
\def\b1{{\boldsymbol{1}}}
\def\cA{\mathcal{A}}
\def\cB{\mathcal{B}}
\def\cC{\mathcal{C}}
\def\cG{\mathcal{G}}
\def\cH{\mathcal{H}}
\def\cJ{\mathcal{J}}\def\eps{\varepsilon}
\def\cM{\mathcal{M}}
\def\cO{\mathcal{O}}
\def\cT{\mathcal{T}}
\def\cW{\mathcal{W}}
\def\cZ{\mathcal{Z}}
\def\N{\ensuremath{\bf N}}
\def\fF{\mathfrak{F}}
\def\eps{\varepsilon}
\def\dist{\text{\rm dist}}
\def\I{\mathbbm{1}}
\newtheorem{theorem}{Theorem}
\newtheorem{lemma}[theorem]{Lemma}
\newtheorem{proposition}[theorem]{Proposition}
\newtheorem{remark}{Remark}
\newtheorem{defn}{Definition}
\newcommand{\set}[1]{\{#1\}}
\newcommand{\A}[1]{A^{(q)}}
\newcommand{\abs}[1]{\left\lvert#1\right\rvert}
\newcommand\floor[1]{\lfloor#1\rfloor}
\newcommand*{\math@version@bold}{bold}
\DeclareMathOperator\DD{% or \DeclareMathOperator*\DD
	\textrm{%
		\usefont{T2A}{cmr}{\ifx\math@version\math@version@bold bx\else m\fi}{n}%
		\CYRD
	}%
}
\begin{document}
\title{Compound Poisson law for  hitting times to periodic orbits in two-dimensional hyperbolic systems.}
%thanks{Grants or other notes
%about the article that should go on the front page should be
%placed here. General acknowledgments should be placed at the end of the article.}
%\titlerunning{Short form of title}        % if too long for running head
\author{Meagan Carney \thanks{Department of Mathematics, University of Houston, Houston, USA. e-mail: $<$meagan@math.uh.edu$>$
Meagan Carney thanks the NSF for partial support on NSF-DMS Grant 1600780.}
\and Matthew Nicol \thanks{Department of Mathematics,
University of Houston,
Houston Texas,
USA. e-mail: $<$nicol@math.uh.edu$>$.
 Matthew Nicol thanks the NSF for partial support on NSF-DMS Grant 1600780 and the hospitality and
 support of the Max Planck Institute for the Physics of Complex Systems, Dresden, where this work was completed.
}\and  Hong-Kun Zhang% \thanks{Department of Math. \& Stat., University of Massachusetts Amherst,  MA 01003 hongkun@math.umass.edu }
 }

\date{\today}

\maketitle

\begin{abstract}
We show that a compound Poisson distribution holds for scaled exceedances of observables $\phi$ uniquely maximized at a periodic point $\zeta$  in a variety of two-dimensional hyperbolic dynamical  systems with singularities $(M,T,\mu)$, including the billiard maps of Sinai dispersing billiards in both the finite and infinite horizon case.  The observable we consider is of form $\phi (z)=-\ln d(z,\zeta)$ where $d$ is a metric defined in terms of the stable and unstable foliation.
The compound Poisson process we obtain  is a
P\'olya-Aeppli distibution of index $\theta$. We calculate $\theta$ in terms of the derivative of the map $T$.  Furthermore if we define $M_n=\max\{\phi,\ldots,\phi\circ T^n\}$
and $u_n (\tau)$ by $\lim_{n\to \infty} n\mu (\phi >u_n (\tau) )=\tau$ the maximal process satisfies an extreme value law of form $\mu (M_n \le u_n)=e^{-\theta \tau}$.
These results generalize to a broader class of functions maximized at $\zeta$, though the formulas regarding the parameters in the distribution need to be modified.

\end{abstract}

\centerline{AMS classification numbers: 37D50, 37A25}

\section{Motivation and relevant works}

The study of the statistical properties of 2-dimensional hyperbolic
systems with singularities was  motivated in large part by a desire to understand mathematical
billiards with chaotic behavior. This model was introduced by Sinai in~\cite{Sinai} and
has been studied  by many authors~\cite{BSC90, BSC91, Y98,Y99,CM}.

The statistical properties of chaotic dynamical systems are often described by probabilistic  limit theorems. Let
$(M,T,\mu)$ be a dynamical system, i.e., a transformation
$T\colon M\to M$ preserving a probability measure $\mu$ on $M$.
For any real-valued function $\phi$ on $M$ (often called an
observable), let $X_n=\phi\circ T^n$, for any $n\geq 0$. We consider the stochastic process generated by the time-series $\{X_n\}$. We assume $\phi$ has a unique maximal point $\zeta$. For any fixed high threshold $u>0$,  rare events can be defined  as the event of  the exceedance $(X_n>u)$, for some $n\geq 0$.   Laws of rare events for chaotic dynamical systems have been investigated, with results on both   hitting time statistics (HTS) and return time statistics (RTS)~\cite{Collet,DGS,Hirata,haydn-wasilewska,Gupta,CC,FFT1,FFT2}.  Techniques from extreme value theory (EVT) have been used to understand the
statistics of rare events~\cite{Collet, FFT1,FFT2,Gupta,GHN,HNT,Keller,Ferguson_Pollicott}. Extreme value theory concerns the distributional and almost sure limits  of the derived time series of maxima $M_n:=\max\{X_1, \cdots, X_n\}$. Since we are assuming $\phi$ has a unique maximum at $\zeta$ there is a close relation between extreme values of $\phi \circ T^n$ and visits to shrinking neighborhoods of $\zeta$.

Let $u_n$ be a sequence of constants defined by the requirement that $\lim_{n\to\infty} n\mu(\psi>u_n)=\tau$. The research cited above has shown that for a variety of  chaotic dynamical systems, with this scaling,   $$\lim_{n\to\infty} \mu(M_n\leq  u_n)=e^{-\theta \tau},$$ where $\theta\in [0,1]$.  The parameter $\theta$ is called the extremal index and roughly measures the clustering of exceedances of the maxima. In fact $\frac{1}{\theta}$ is the average cluster size of exceedances given that one exceedance occurs.

 For certain one-dimensional uniformly expanding maps and Anosov toral automorphisms a strict dichotomy has been observed. The dichotomy is that
 $\theta=1$ if $\zeta$ is not periodic and $\theta<1$ otherwise~\cite{Keller,Ferguson_Pollicott,FFT2}. In this paper, we investigate the extremal index when $\zeta$ is a periodic point in the setting of
 two-dimensional hyperbolic systems with singularities. Poisson return time statistics for  generic points in a variety of billiard systems (both polynomially and exponentially mixing) were established in~\cite{FHN}. Related results on Poisson return time statistics
 were obtained for Young Towers with polynomial tails in~\cite{Pene_Saussol}. However the results of  both works~\cite{FHN, Pene_Saussol} were limited to a full measure set of generic points, which explicitly  excluded  periodic orbits. This paper extends the results of~\cite{FHN} to the case of periodic points in a large class of billiard systems, using somewhat different techniques. For periodic points we show a compound Poisson distribution for exceedances holds and that the extremal index is determined by the Jacobian of the map $T$ along the
 periodic orbit (a precise formula is given in the statement of our main theorem). In~\cite{Dichotomy} a similar
 result was shown for smooth toral automorphisms, and in fact in that setting a strict dichotomy was shown ($\theta=1$ if $\zeta$ is not periodic and $\theta<1$ otherwise). We don't expect a dichotomy to hold in the class of  hyperbolic systems with singularities,
for example chaotic billiards.  We expect the presence of singularities to enable a variety of extremal
distributions, for example at observations maximized on singular sets. The  analysis of  the statistical properties of hyperbolic systems with singularities is more difficult than for smooth uniformly hyperbolic systems and the arguments of~\cite{Dichotomy}
do not generalize in a straightforward way.
The main difficulty is caused by the
singularities and the resulting fragmentation of phase space  during the
dynamics, which slows down the global expansion of unstable manifolds. We are able to use a growth lemma~\cite{CM} to overcome these difficulties
under certain other  assumptions on the dynamics. Our results apply in particular to the billiard mapping of  Sinai dispersing billiards with finite and infinite horizon.
%Moreover, the differential of the billiard map is unbounded and has
%unbounded distortions near the singularities, which complicates  the
%study of the rate decay of correlations. One common technique is to subdivide the vicinity of singularities into countably many ``shells'' in which distortions can be %effectively controlled. A good account of this approach is given in~\cite{CM}.

\subsection{Main assumptions}

Let $M$ be a 2-dimensional compact Riemannian manifold, possibly with
boundary. Let $\Omega \subset M$ be an open subset and let $T\colon
\Omega \to M$ be a $C^{1+\gamma}$ diffeomorphism of $\Omega$ onto
$T(\Omega)$ (here $\gamma\in (0,1]$). We assume that $S_1 =
M\setminus\Omega$  is a finite or countable union of smooth compact
curves . Similarly, $S_{-1} = M\setminus T(\Omega)$  is a finite or
countable union of smooth compact curves. If $M$ has boundary
$\partial M$, it must be a subset of both $S_1$ and $S_{-1}$. We
call $S_1$ and $S_{-1}$ \emph{singularity sets} for the maps $T$ and
$T^{-1}$, respectively. We denote by $\Omega_i$, $i\geq 1$, the connected components of
$\Omega$; then $T(\Omega_i)$ are the connected components of
$T(\Omega)$. We assume that $T|_{\Omega}$ is time-reversible, and the restriction of the map $T$ to any
component $\Omega_i$ can be extended by continuity to its boundary
$\partial \Omega_i$, though the extensions to $\partial \Omega_i \cap
\partial \Omega_j$ for $i\neq j$ need not agree. Similarly, for each $i$ the
restriction of $T^{-1}$ to any connected component $T(\Omega_i)$
can be extended by continuity to its boundary $\partial T(\Omega_i)$.

Next we assume that the map $T$ is  hyperbolic, as defined by Katok and Strelcyn~\cite{KS}. This means that $T$ preserves
a probability measure $\mu$ such that $\mu$-a.e. point $x\in M$ has
two non-zero Lyapunov exponents: one positive and one negative. Also,
the first and second derivatives of the maps $T$ and $T^{-1}$ do
not grow too rapidly near their singularity sets $S_1$ and $S_{-1}$,
respectively, and the $\epsilon$-neighborhood of the singularity set has measure $\cO(\epsilon)$; this is to ensure the existence and absolute continuity of stable
and unstable manifolds at $\mu$-a.e. point.
Let $\cW^u=\cap_{n\geq 0} T^n (\cM\setminus S_1)$.
$\cW^u$ is (mod 0) the union of all unstable manifolds, and we assume
that the partition  $\cW^u$ of $M$ into unstable manifolds is measurable,
so that $\mu$ induces conditional distributions on $\mu$-almost all
unstable manifolds (see the definition and basic properties of
conditional measures in \cite[Appendix~A]{CM}). Most importantly, we
assume that the conditional distributions of $\mu$ on unstable
manifolds $W\subset \cW^u$ are absolutely continuous with respect to
the Lebesgue measure on $W$. This means that $\mu$ is the so called
Sinai-Ruelle-Bowen (SRB) measure.

%We consider the statistical properties of $\mu$, so it is
%natural for us to take its ergodicity and mixing for granted. Hence
We also assume that our SRB
measure $\mu$ is ergodic and mixing.
% In Sinai dispersing billiards, all the above assumptions are satisfied and are
%usually easy to check. Moreover, the measure $\mu$ is smooth and has a
%positive density on all of $\Omega$.
 In physics terms, $\mu$ is an
equilibrium state for the potential $-\ln DT_{|\cW^u}$.

%Another important class of systems consists of small
%perturbations of chaotic billiards (usually induced by external forces
%or special boundary conditions)~\cite{Ch01,Ch08}. Such systems may model
%electrical current~\cite{CDlorentz,CELS93a}, heat conduction and viscous
%flows or the motion under gravitation on the Galton board
%~\cite{CDgalton}. For perturbed billiards all the above assumptions
%are satisfied, too, but the measure $\mu$ need no longer be absolutely
%continuous: it may be singular with respect to the Lebesgue measure on
%$M$ (though every open subset $U\subset M$ still has a positive
%$\mu$-measure). However the conditional distributions of $\mu$ on unstable
%manifolds $W\subset \cW^u$ are absolutely continuous with respect to
%the Lebesgue measure on $W$. In physics, such a measure $\mu$ is called a nonequilibrium
%steady state (NESS).

For $n\geq 1$, let
$$
    S_{n}=\cup_{i=0}^{n-1} T^{-i} S_1 \,\,\,\,\text{ and }\,\,\,\,\, S_{-n}=\cup_{i=0}^{n-1} T^i S_{-1},
$$
for each $n \geq  1$. Then the map $T^n\colon M\setminus  S_{n}\to
M\setminus  S_{-n}$ is a  $C^{1+\gamma_0}$ diffeomorphism.\\

We next make more specific assumptions on the system $(M,T,\mu)$ to give
sufficient conditions for  exponential decay rates of correlations, as well as for the coupling lemma. These assumptions have been made in other works in the literature~\cite{C99,CD,CM,CZ09}.\\

%\noindent (\bH3)   There exist $\gamma_0\in (0,1)$ and   $C_{\br}>1$ such
 %  that for any unstable curve $W\in \cW^{u}_{\cF}\cap \cF^k\cD_m$, with $m>1$, $k=1,\cdots, m-1$ and
  %  any $x, y\in W$,
% \beq
 %     \left|\ln J_W (\cF^{-k}x)-\ln J_W (\cF^{-k}y)\right|  \leq C_{\br}\, \dist(x, y)^{\gamma_0} \label{distor10}
% \eeq
%where  $J_W (\cF^{-k}x)=dm_{\cF^{-k}W}(F^{-k}x)/dm_W(x)$  denotes  the Jacobian of $\cF^{-k}$ at $x\in W$ with respect to the Lebesgue measure on the unstable curve $W$.\\

\begin{itemize}
\item[(\textbf{h1})] \textbf{Hyperbolic cones for $T$\footnote{We have already assumed that Lyapunov exponents are not zero a.e., but our methods also use stable and unstable cones for the map $T$.} of $T$}. There exist two families of cones
$C^u_x$ (unstable) and $C^s_x$ (stable) in the tangent spaces
${\cal T}_x M$, for all $x\in M\setminus S_1$, and there exists a
constant $\Lambda>1$, with the following properties:
\begin{itemize}
\item[(1)] $D_x T (C^u_x)\subset C^u_{ T x}$ and $D_x T
    (C^s_x)\supset C^s_{ T x}$, wherever $D_x T $ exists.
    \item[(2)] $\|D_x T(v)\|\geq \Lambda \|v\|, \forall
    v\in C_x^u, \quad\text{and}\quad   \|D_xT^{-1}(v)\|\geq
    \Lambda \|v\|, \forall v\in C_x^s$. \item[(3)] These
    families of cones are   continuous on $M$
 and the angle between $C^u_x$ and $C^s_x$ is uniformly
 bounded away from
zero.
 \end{itemize}

We say that a smooth curve $W\subset M$ is an unstable (stable) \emph{curve} if at every point $x \in W$ the tangent line
$\cT_x W$ belongs in the unstable (stable) cone $C^u_x$ ($C^s_x$).
Furthermore, a curve $W\subset  M$ is an unstable (stable) \emph{manifold} if $T^{-n}(W)$ is an unstable (stable) curve for all $n \geq 0$ (resp. $\leq 0$).

\item[(\textbf{h2})] \textbf{Singularities.} The boundary $\partial M$ is transversal to both stable and
    unstable cones. Every other smooth curve $W\subset S_1\setminus \partial M$ (resp.
    $W\subset S_{-1}\setminus \partial M$ ) is a
 stable (resp.  unstable) curve. Every curve in $ S_1$
 terminates either inside another curve of $ S_1$ or on
 the boundary $\partial M$. A similar assumption is made for $S_{-1}$. Moreover, there exists  $C>0$ such
    that for any $x\in M\setminus  S_1$ \beq\label{upper}
   \|D_x T \|\leq C\, \dist(x,  S_1)^{-1},\eeq
   and for any $\epsilon >0$,
   \beq\label{epscs11}\mu\bigl(x\in M\colon \dist(x, S_1)<\eps\bigr)<C\epsilon.\eeq

Note that (\ref{epscs11}) implies that for $\mu$-a.e. $x\in M$, there exists a stable and unstable manifold $W^{u/s}(x)$, such that $T^n W^s(x)$ and $T^{-n}W^u(x)$ does not intersect $ S_1$, for any $n\geq 0$.

\begin{defn}
For every $x, y \in M$, define $\bs_+(x,y)$, the forward
\emph{separation time} of $x, y$, to be the smallest integer
$n\geq 0$ such that $x$ and $y$ belong to distinct elements of
$M\setminus S_n$.
 Fix $\beta\in(0,1)$, then $d(x,y)=\beta^{\bs_+(x,y)}$
 defines a metric on $ M$.
Similarly we define the backward separation time $\bs_-(x,y)$.
\end{defn}

\item[(\textbf{h3})] \textbf{Regularity of stable/unstable
curves}. We assume that the following families of stable/unstable curves, denoted by $\cW^{s,u}_T$ are invariant under $T^{-1}$ (resp., $T$) and include all stable/unstable manifolds:

\begin{enumerate}
\item[(1)]  \textbf{Bounded curvature.} There exist $B>0$ and $c_M>0$, such that the  curvature
    of any $W\in \cW^{s,u}_T$ is uniformly bounded from above by $B$, and the length of the curve $|W|<c_M$.
 \item[(2)] \textbf{Distortion bounds.} There exist $\gamma_0\in (0,1)$ and   $C_{\br}>1$ such
    that for any unstable curve $W\in \cW^{u}_T$ and
    any $x, y\in W$,
 \beq
      \left|\ln\cJ_W (T^{-1}x)-\ln \cJ_W (T^{-1}y)\right|       \leq C_{\br}\, \dist(x, y)^{\gamma_0} \label{distor10}
 \eeq
where  $\cJ_W (T^{-1}x)=dm_{T^{-1}W}(T^{-1}x)/dm_W(x)$  denotes  the Jacobian
of $T^{-1}$ at $x\in W$ with respect to the Lebesgue measure $m_W$ on the unstable curve $W$.

\item[(3)] { \textbf{Absolute continuity.}}
 Let $W_1,W_2\in \cW^{u}_T$ be two unstable curves close to each other. Denote
 \begin{equation*}
 W_i'=\{x\in W_i\colon
W^s(x)\cap W_{3-i}\neq\emptyset\}, \hspace{.5cm} i=1,2.
 \end{equation*} The map
$\bh\colon W_1'\rightarrow W_2'$ defined by sliding along stable
manifolds
 is called the \textit{holonomy} map. We assume $\bh_*\mu_{W_1'} \prec \mu_{W_2'}$, and furthermore, there exist uniform constants $C_{\br}>0$ and $\vartheta_0\in (0,1)$, such that  the Jacobian of $\bh $ satisfies
 \beq\label{Jh}
 |\ln\cJ\bh(y)-\ln \cJ\bh(x)| \leq C_{\br}
 \vartheta_0^{\bs_+(x,y)},
\hspace{1cm}\forall x, y\in W_1' \eeq
Similarly, for any $n\geq 1$ we can define the holonomy map
$$
   \bh_n=T^n\circ \bh \circ T^{-n}:T^n W_1\to T^n W_2,
$$
and then (\ref{Jh}) and the uniform hyperbolicity (\textbf{h1}) imply
\beq \label{cJhn}
    \ln \cJ\bh_n(T^n \bx)\leq C_{\br} \vartheta_0^{n}
\eeq
\end{enumerate}

\item[(\textbf{h4})] {\textbf{ One-step expansion.}}
We have \beq
  \liminf_{\delta\to 0}\
   \sup_{W\colon |W|<\delta}\sum_{n}
  \frac{|T^{-1}V_{n}|}{|V_{n
   }|}<1,
      \label{step1} \eeq where the supremum is taken over regular unstable curves $W\subset M$, $|W|$ denotes the length of $W$, and $V_{n}$, $n\geq 1$, denote the
      smooth components of $T(W)$.

   \end{itemize}

\begin{remark}
These assumptions $\textbf{h1}-$$\textbf{h4}$ are satisfied by the billiard map associated to Sinai dispersing billiards with finite and infinite horizon. Further
systems which satisfy these assumptions are given in the last section on applications.
\end{remark}

\begin{remark}
These assumptions $\textbf{h1}-$$\textbf{h4}$ along with the Growth Lemma imply the existence of a Young Tower with exponential tails~\cite[Lemma 17]{CZ09}.

\end{remark}

  \subsection{Statement of the main results}

 We fix a  hyperbolic periodic point $\zeta\in M$ with prime period $q>1$, and let $\phi:M\to \mathbb{R}\cup\{+\infty\}$ be given by $$\phi(z)=-\ln (d(z,\zeta)),$$
where $d$ is a metric on $M$ that will be specified later. We assume that iterates of $\zeta$ do not lie  on the singular sets $S_1\cup S_{-1}$.
Our metric $d$ will be adapted to the chart given by the stable and unstable manifolds of $\zeta$, denoted as $W^s(\zeta)$ and $W^u(\zeta)$. If the stable manifold  of $x$, denoted as $W^s(x)$ intersects $W^u(\zeta)$, say at a point $z$, then we define $x^s:=\dist_{W^u(\zeta)}(z, \zeta)$, i.e. the distance of $z$ and $\zeta$ measured along the unstable manifold $W^u(\zeta)$.  Note that if $W^u(x)$ is very short and does not reach $W^s(\zeta)$, then we may extend $W^u(x)$ as an unstable curve, thus $x^u$ can be similarly  defined. If the unstable manifold (or extension to an unstable curve) of $x$, denoted as $W^u(x)$ intersects $W^s(\zeta)$ at $z$, we define $x^u:=\dist_{W^s(\zeta)}(z, \zeta)$.  The foliation of stable and unstable curves of a sufficiently small neighborhood of $\zeta$ will be H\"older continuous.   Moreover,  if both $x$, $y$
lie in the same local chart determined by  stable and unstable manifolds (or stable and unstable curves) so that $x=(x^u, x^s)$, $y=(y^u, y^s)$,  we define \beq\label{defn:d}d(x,y)=\max\{ |x^u-y^u|,|y^s-y^s| \}.\eeq
As in~\cite{Dichotomy} we expect the form
of the distribution for cluster size in the compound Poisson distribution we obtain to depend upon the metric used. With the dynamically adapted metric we use we obtain a geometric distribution
with parameter equal to  the extremal index $\theta$, $\theta=1-\frac{1}{|DT^q_u (\zeta)|}$, where $DT^q_u (\zeta)$ is the derivative of $T^q$ in the unstable direction at $\zeta$.  With the usual Euclidean metric $\rho$ we would expect a different distribution along the lines of the (complicated) calculations in~\cite[Section 6]{Dichotomy}.

The functional form of $\phi$ will
determine the scaling constants in the extreme value distribution but results for one functional type are easily transformed into any other, see for example~\cite{GHN,FFT1}.
Let $X_n:=\phi\circ T^n$. Since $\mu$ is invariant  the process $\{X_n, n\geq 0\}$ is stationary. Note that $\{X_n>u\}=T^{-n}(B_{e^{-u}}(\zeta))$, where $B_{e^{-u}}(\zeta)$ denotes the $d$-ball centered at $\zeta$ of  radius $e^{-u}$.
%From the assumptions
%made on  the singular set, short stable and short unstable manifolds are dense in the phase space.  Since the ball is with respect to the $d$-metric there exists a %rectangular set, denoted as $\cU(\zeta, e^{-u})$ with boundary made of two pairs of stable and two pairs of unstable manifolds, such that $B_{e^{-u}}(\zeta)$ is indeed %a hyperbolic set, obtained by the intersection of a family of stable manifolds and a family of unstable manifolds $\Gamma^s(\zeta,e^{-u})\cap\Gamma^u(\zeta,e^{-u})%%\cap \cU(\zeta, e^{-u})$. Here  $\Gamma^s(\zeta, e^{-u})$ is the collection of all stable manifolds that fully cross $\cU(\zeta, e^{-u})$; and $\Gamma^u(\zeta, e^{-u})$ %is the collection of all unstable manifolds that fully cross $\cU(\zeta, e^{-u})$.
In order to obtain a nondegenerate limit for the distribution of $M_n=\max\{X_0,\cdots, X_n\}$, we choose a normalizing sequence $\{u_n\}=\{u_n (\tau)\}$  such that
\[
u_n=u_n(\tau):=\inf \{u>0\,:\, \mu(X_0\leq u)\geq 1-\frac{\tau}{n}\}
\]
for any $\tau>0$. We also define
\[
U_n (\tau)=(X_0>u_n (\tau) )
\]
Then we can check that
\beq\label{chooseun}\lim_{n\to\infty} n \mu(X_0>u_n)=\lim_{n\to\infty} n \mu(U_n)=\tau
\eeq

The extremal index, written as $\theta$, takes values in $[0, 1]$, and can be interpreted
as an indicator of extremal dependence, with $\theta = 1$ indicating
asymptotic independence of extreme events. On the other hand $\theta=0$ represents
the case where we can expect strong clusterings of extreme events.
In this case it is natural to expect excursions away from  moderate values
to extreme regions to persist for random times which have heavy-tailed
distributions.

 Note that the choice of $u_n$ is made so that the mean number of exceedances by $X_0,\cdots, X_n$ is approximately constant. For the case when $\{X_n\}$ are iid, it was shown in~\cite{LLR} that (\ref{chooseun}) is equivalent to the fact that $\mu(M_n\leq u_n)\to e^{-\tau}$ as $n\to\infty$.
We are interested in knowing if there is a non-degenerate distribution $H$ such that the scaled process $\mu (M_n \le u_n)$  converges in distribution to $H$ as $n$ goes to infinity. Then we say that $H$ is an Extreme Value Distribution for $M_n$. Note that by the Birkhoff Ergodic Theorem, we know that $M_n\to \max \{ \phi(\zeta),\infty\}$ almost surely and hence we must scale $u_n\to \max\{ \phi(\zeta),\infty\} $ as $n\to\infty$. We refer the event $\{X_j>u\}$ as an exceedance at time $j$ of level $u$.

The assumption that $\zeta$ is a hyperbolic $q$-periodic point, the assumptions (\textbf{h1})-(\textbf{h4}) and the definition of the $d$-metric  implies that there exists $\theta\in (0,1)$, such that
\beq\label{theta}
\lim_{n\to\infty} \mu(X_q> u_n| X_0>u_n)=1-\theta
\eeq
$\theta=1-\frac{1}{|DT^q_u (\zeta)|}$, where $DT^q_u (\zeta)$ is the derivative of $T^q$ in the unstable direction at $\zeta$. The calculation is the same as in~\cite[Section 6]{Dichotomy}.

Our first theorem is

\begin{theorem}
Let $\zeta$ be a periodic orbit of period $q$, $q\not \in S_{1}\cup S_{-1}$,
and define
 $$\phi(z)=-\ln (d(z,\zeta)),$$
where
 $d$ is the  metric adapted to the chart given by the stable and unstable foliation.
 Define  $M_n:=\max\{\phi , \ldots, \phi\circ T^{n-1}\}$.
Then
\[
\lim_{n\to\infty}\mu(M_n\leq u_n)= e^{-\theta\tau},
\]
where $\theta=1-\frac{1}{|DT^q_u (\zeta)|}$.

\end{theorem}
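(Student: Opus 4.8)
The plan is to establish the extreme value law by invoking a general result that converts the dichotomy-type hypothesis \eqref{theta} into the limiting law $e^{-\theta\tau}$, provided one can verify an appropriate mixing/decay-of-correlations condition and a condition controlling short-return clustering. Concretely, I would verify the conditions $\text{\rm SP}_p(u_n)$ and $\text{\rm SP}_p'(u_n)$ (or equivalently the conditions $\D$ and $\D'$ of Freitas--Freitas--Todd adapted to periodic points, the so-called $\D_q$ and $\D_q'$) for the stationary process $X_n = \phi\circ T^n$ with respect to the SRB measure $\mu$. The first step is to record that the target sets are metric balls: $\{X_0 > u_n\} = B_{e^{-u_n}}(\zeta) =: B_n$, a set whose $\mu$-measure is $\sim \tau/n$ by the choice \eqref{chooseun}, and whose geometry in the adapted chart is a ``rectangle'' $[-r_n,r_n]_u \times [-r_n,r_n]_s$ aligned with the stable/unstable foliation of $\zeta$.

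\textbf{Step 1 (annular decomposition).} Following the approach of \cite{Dichotomy}, I would decompose $B_n$ according to the dynamics near the periodic orbit. Since $\zeta$ has period $q$ and is hyperbolic with unstable multiplier $|DT^q_u(\zeta)|>1$, the map $T^q$ near $\zeta$ is, up to the H\"older change of coordinates giving the adapted chart, close to the linear hyperbolic map with unstable eigenvalue $\Lambda_u := DT^q_u(\zeta)$ and stable eigenvalue $\Lambda_s$ with $|\Lambda_s|<1$. The set $B_n \cap T^{-q}B_n$ is then (asymptotically) the sub-rectangle of width $r_n/|\Lambda_u|$ in the unstable direction and full width $r_n$ in the stable direction, so $\mu(B_n\cap T^{-q}B_n)/\mu(B_n) \to 1/|\Lambda_u| = 1-\theta$, which is exactly \eqref{theta}; this identifies $\theta = 1 - 1/|DT^q_u(\zeta)|$. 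More importantly, the ``escape'' set $Q_n := B_n \setminus T^{-q}B_n$ satisfies $\mu(Q_n)/\mu(B_n)\to\theta$, and the orbit of a point of $B_n$ either leaves immediately (lands in $Q_n$) or gets pushed deeper toward $\zeta$ along the stable direction while spreading along the unstable direction.

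\textbf{Step 2 (verifying $\D_q(u_n)$ via decay of correlations).} For the long-range condition I would use that, by the second Remark in the excerpt, assumptions $\textbf{h1}$--$\textbf{h4}$ together with the Growth Lemma yield a Young tower with exponential tails, hence exponential decay of correlations for H\"older (or dynamically H\"older, i.e.\ Lipschitz in the separation-time metric $d(x,y)=\beta^{\bs_+(x,y)}$) observables against $L^\infty$. The indicator $\I_{B_n}$ is not H\"older, but one approximates it from inside and outside by dynamically Lipschitz functions at scale comparable to $r_n^{\gamma_0}$, incurring an error of order $\mu(\text{annulus of width } r_n^{\gamma_0}\text{ around } \partial B_n)$, which by \eqref{epscs11}-type regularity of the SRB measure on stable/unstable curves is $o(\mu(B_n))$. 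Summing the resulting correlation bounds over the required range of times (splitting into a short ``transient'' window of length $O(\log n)$, controlled by the escape-annulus measure, and a long window, controlled by exponential mixing) gives $\D_q(u_n)$ with error $\to 0$ after multiplication by $n$.

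\textbf{Step 3 (verifying the short-return condition $\D_q'(u_n)$).} This is where the periodicity is essential and where I expect the main obstacle to lie. One must show that, apart from the ``expected'' returns at multiples of $q$ that are already subtracted off in the periodic version of the conditions, there is no anomalous clustering: $\lim_{n\to\infty} n\sum_{j=q+1}^{\lfloor n/k_n\rfloor} \mu\big(B_n \cap T^{-j}B_n \setminus (\text{the }q\text{-periodic part})\big) = 0$ for a suitable sequence $k_n\to\infty$. Geometrically, $B_n\cap T^{-j}B_n$ with $j$ not a multiple of $q$ forces the orbit to leave a neighborhood of $\zeta$ and return; the key tool is the Growth Lemma, which gives uniform (exponential in the number of intervening singularity cuts) control on how fast unstable curves through $B_n$ grow, together with the measure estimate \eqref{epscs11} and the absolute continuity \eqref{Jh} of the holonomy. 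The difficulty specific to billiards is that a short unstable curve inside $B_n$ can be fragmented by $S_1$ before it has grown to unit size, slowing expansion; one must show that the total measure of points whose orbits ``come back'' to the shrinking ball $B_n$ before decorrelating is still negligible. I would handle this by a standard first-return/induced-map argument on the Young tower: lift $B_n$ to the tower, use that the return map is uniformly hyperbolic with bounded distortion so that $T^{-j}B_n\cap B_n$ has conditional measure $\lesssim \mu(B_n)$ along unstable leaves uniformly in the admissible $j$, and then the sum over $j$ up to $n/k_n$ contributes $O(n\cdot (1/k_n)\cdot \mu(B_n)^2 \cdot n) = O(\tau^2/k_n)\to 0$. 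The subtle point is the range $q < j \le C\log n$ where the orbit has not yet left the chart: there one uses the linearization near $\zeta$ to check directly that $B_n\cap T^{-j}B_n$ is empty unless $q\mid j$, because the stable/unstable widths are monotonically (de/in)creasing powers of $|\Lambda_s|,|\Lambda_u|$ and cannot realign to fit inside $B_n$ at a non-multiple of $q$.

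\textbf{Step 4 (conclusion).} With $\D_q(u_n)$ and $\D_q'(u_n)$ verified and \eqref{theta} giving the value of $\theta$, the general theory (the periodic-point EVT machinery, e.g.\ the Freitas--Freitas--Todd criterion yielding an extremal index $\theta = 1 - \lim \mu(X_q>u_n\mid X_0>u_n)$) gives $\mu(M_n\le u_n)\to e^{-\theta\tau}$ directly. I would then simply cite this implication, noting that the only system-specific inputs are the exponential decay of correlations, the regularity of $\mu$ near $\partial B_n$, and the Growth-Lemma-based short-return estimate, all of which were supplied above. The computation of $\theta$ itself is, as the authors note, identical to \cite[Section 6]{Dichotomy}: in the adapted metric the ball is a foliation-aligned rectangle, $T^q$ acts as multiplication by $\Lambda_u$ on the unstable coordinate (to leading order), and the fraction of the rectangle that maps back into itself is exactly $1/|\Lambda_u|$.
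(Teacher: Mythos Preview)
Your overall scheme matches the paper's: verify the Freitas--Freitas--Todd conditions $\DD_q(u_n)$ and $\DD'_q(u_n)$ and invoke \cite[Corollary~2.4]{FFT2}. Your Step~2 is also essentially the paper's argument for $\DD_q(u_n)$: Lipschitz approximation of $\I_{A_n^q}$ together with Young-tower decay against $L^\infty$ for observables constant on stable leaves, producing an error $\gamma(n,j)=C(n^{-2}+n^2\theta_1^{\lfloor j/2\rfloor})$ and the choice $t_n=(\ln n)^{1+\delta}$.

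The genuine gap is in Step~3. You split the range of $j$ into only two pieces and assert that a ``first-return/induced-map argument on the Young tower'' gives $\mu(A_n^q\cap T^{-j}A_n^q)\lesssim\mu(B_n)^2$ uniformly for $j>C\log n$. This uniform square bound is not available there: the decay-of-correlations route delivers it only once $n^2\theta_1^{j/2}\ll n^{-2}$, i.e.\ for $j\gtrsim C_1\ln n$ with $C_1$ depending on $\theta_1$, whereas your linearization only works while the orbit stays in the chart of $\zeta$, i.e.\ $j\le\eta\ln n$ with $\eta\sim 1/(2\ln\Lambda)$. There is no a priori inequality between $\eta$ and $C_1$, so an intermediate window remains; lifting $B_n$ to the tower does not help, because the unstable curves inside $A_n^q$ have length only $\sim n^{-1/2}$ and you have no control on their fragmentation by singularities under $T^j$ without the Growth Lemma. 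The paper's proof (whose stated novelty is precisely this point) closes the window by applying the Growth Lemma directly to the standard family $\cG_n$ obtained by foliating $A_n^q$ into unstable curves of length $\sim n^{-1/2}$: one gets $\cZ(\cG_n)\le Cn^{1/2}$, hence $\cZ(T^j\cG_n)\le c\vartheta^j n^{1/2}+C_z$, and taking $\eps=n^{-1/2}$ in Lemma~\ref{lem: growth} yields
\[
\mu(A_n^q\cap T^{-j}A_n^q)\le \mu(A_n^q)\bigl(c'\vartheta^{j}n^{1/2}+C_z\bigr)n^{-1/2},
\]
which is weaker than $\mu(B_n)^2$ but, summed over $j\in[\ln\ln n,(\ln n)^{1+\delta}]$ and multiplied by $n$, tends to $0$. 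This bridges the linearization range $j\le\eta\ln n$ and the decorrelation range $j\ge(\ln n)^{1+\delta}$. You correctly named the Growth Lemma as ``the key tool'' but then switched to the tower sketch; staying with the standard-family computation is what actually makes $\DD'_q(u_n)$ go through.
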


Now we consider the related rare event point processes (REPP). Let $N_n$ be the number of exceedances of a level $u_n$ by the process $X_0, \cdots, X_n$. If $\{X_n\}$ is an  i.i.d. sequence, it was proved in~\cite{LLR} that if $u_n$ satisfies $n\mu(X_0>u_n)\to \tau$ as $n\to\infty$, then $N_n$ follows a Poisson distribution with intensity $\tau$:
$$\mu(N_n\leq k)\to e^{-\tau}\sum_{i=0}^k \frac{\tau^i}{i!},$$ for any $k\geq 0$. On the other hand, for a stationary process $\{X_n\}$ with clustering of recurrence we expect that $N_n$ converges to a compound Poisson distribution. This behavior has been observed in a variety of one-dimensional expanding maps~\cite{FFT2,Ferguson_Pollicott,Keller} as well as in two-dimensional hyperbolic linear  toral automorphisms~\cite{Hirata,DGS,Dichotomy}. In fact in these papers a strict dichotomy has been observed in the extreme value statistics of functions maximized at
periodic as opposed to non-periodic points.  In this paper we show that a compound Poisson process governs the return times to periodic points for  a large class of two-dimensional invertible hyperbolic systems. As we mentioned the precise form of the compound Poisson process depends upon the metric used. For simplicity we consider a dynamically adapted metric $d$, which yields a geometric distribution for the cluster size of exceedances with parameter $\theta$. Other distributions for  the cluster size would be expected with different metrics, as in~\cite[Section 6]{Dichotomy}.

To make precise our statements we need the following definitions. Denote by $\mathcal{R}$ the ring generated by intervals of the type $[a,b)$, for $a,b\in\bR_0^+$. Thus for every $J\in\mathcal{R}$
there are $k$ intervals $I_1,\ldots,I_k\in\mathcal{S}$, say $I_j=[a_j,b_j)$ with $a_j,b_j\in\bR_0^+$, such that
$J=\cup_{i=1}^k I_j$. For
$I=[a,b)$ and $\alpha\in \bR$, we set $\alpha I:=[\alpha
a,\alpha b)$ and $I+\alpha:=[a+\alpha,b+\alpha)$. Similarly, for
$J\in\mathcal{R}$, we define $\alpha J:=\alpha I_1\cup\cdots\cup \alpha I_k$ and
$J+\alpha:=(I_1+\alpha)\cup\cdots\cup (I_k+\alpha)$.

\begin{defn}\label{def:rare event process}
Given $J\in\mathcal{R}$ and a sequence $(u_n)_{n\in\bN}$, the \emph{rare event point process} (REPP) is defined by
counting the number of exceedances (or events $(X_i>u_n)$) during the (re-scaled) time period $u_n^{-1} J\in\mathcal{R}$. More precisely, for every $J\in\mathcal{R}$, set
\begin{equation}
\label{eq:def-REPP} N_n(J):=\sum_{i\in u_n^{-1} J\cap\N_0}\I_{X_i>u_n}.
\end{equation}
\end{defn}

%Under dependence conditions similar to the ones previously analyzed, the REPP just defined converges in distribution to a standard Poisson process when no clustering is involved, and converges in %distribution to a compound Poisson process with intensity $\vartheta$ and a geometric multiplicity distribution function otherwise.

For the sake of completeness, we also define what we mean by a Poisson and a compound Poisson process.
 %(See \cite{K86} for more details.)

\begin{defn}
\label{def:compound-poisson-process}
Let $Y_1, Y_2,\ldots$ be  an iid sequence of random variables with common exponential distribution of mean $1/\vartheta$. Let  $Z_1, Z_2, \ldots$ be another iid sequence of random variables, independent of $(Y_i)$, and with distribution function $G$. Given these sequences, for $J\in\bR$ define
$$
N(J)=\int \I_J\;d\left(\sum_{i=1}^\infty Z_i \delta_{Y_1+\ldots+Y_i}\right),
$$
where $\delta_t$ denotes the Dirac measure at $t>0$.  We say that $N$ is a compound Poisson process of intensity $\vartheta$ and multiplicity d.f.\ $G$.
\end{defn}
\begin{remark}
In our context we think of the random variable  $Z_i$ as determining the number of exceedances given that one has occurred in a certain time interval.
\end{remark}

\begin{remark}
\label{rem:poisson-process}
In our setting $Z_i$ will always be integer valued, as it gives the size of a cluster. This means that $\Lambda$ is completely defined by the values $G(k)=P(Z_1=k)$, for every $k\in\N_0$. Note that, if $G(1)=1$ and $\vartheta=1$, then $N$ is the standard Poisson process and, for every $t>0$, the random variable $N([0,t))$ has a Poisson distribution of mean $t$.
\end{remark}

\begin{remark}
\label{rem:compound-poisson}
In this paper $G$ turns out to be a  geometric distribution of parameter $\theta \in (0,1]$,  i.e. $G(k)=\theta(1-\theta)^{k-1}$, for every $k\in\N_0$. This means that, as in~\cite{HV09,Dichotomy}, the random variable $N([0,t))$ follows a P\'olya-Aeppli distribution
$$
P(N([0,t)))=k=e^{-\theta t}\sum_{j=1}^k \theta^j(1-\theta)^{k-j}\frac{(\theta t)^j}{j!}\binom{k-1}{j-1},
$$
for all $k\in\N$, and that $P(N([0,t))=0)=e^{-\theta t}$. So there is just one parameter $\theta$ governing the compound Poisson distribution.
\end{remark}

\begin{theorem}
\label{thm:dichotomy4}
Let $\zeta$ be a periodic orbit of period $q$, $q\not \in S_1\cup S_{-1}$,
and
 $$\phi(z)=-\ln (d(z,\zeta))$$
where
 $d$ is the  metric adapted to the chart given by the stable and unstable foliation.
 Define $X_n=\phi\circ T^n$ and $M_n:=\max\{X_1,\ldots,X_n\}$. Let $(u_n)_{n\in\N}$ be a sequence satisfying $\lim_{n\to \infty} n\mu (X_0>u_n)=\tau$ for some $\tau\geq 0$ and $(v_n)_{n\in\N}$ be given by $v_n=1/\mu (X_0>u_n)$. Consider the REPP $N_n$ as in Definition~\ref{def:rare event process}. Then the REPP $N_n$ converges in distribution to a compound Poisson process with intensity $\theta=1-\frac{1}{|DT^q_u (\zeta)|}$ and geometric multiplicity d.f. $G^*$ given by $G^*(k)=\theta (1-\theta)^{k -1}$, for all $k\in\N$.

\end{theorem}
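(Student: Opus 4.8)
\bigskip
\noindent\textbf{Plan of the proof.}
The plan is to verify the abstract criterion of~\cite{FFT2} for convergence of the REPP to a compound Poisson process, in the periodic‑point form exploited in~\cite{Dichotomy}. Write $U_n=\{X_0>u_n\}$ and $r_n=e^{-u_n}$, so that $U_n=B_{r_n}(\zeta)$ is, in the chart adapted to $W^u(\zeta),W^s(\zeta)$, the ``rectangle'' $\{|x^u|\le r_n,\ |x^s|\le r_n\}$; set $A_n^{(q)}=U_n\setminus T^{-q}U_n$ and, for $\kappa\ge0$, $A_n^{(q,\kappa)}=\{X_0>u_n,\dots,X_{\kappa q}>u_n,\ X_{(\kappa+1)q}\le u_n\}$. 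Three ingredients are needed: the long‑range condition $\DD_q(u_n)$; the short‑range condition $\DD'_q(u_n)$, namely $\lim_{k}\limsup_n n\sum_{j=q+1}^{\lfloor n/k\rfloor}\mu(A_n^{(q)}\cap T^{-j}A_n^{(q)})=0$; and the existence and identification of $\pi_\kappa:=\lim_n\mu(A_n^{(q,\kappa)})/\mu(U_n)$, which fixes the multiplicity law. For the last of these I would use the local geometry: since the orbit of $\zeta$ avoids $S_1\cup S_{-1}$, on a fixed neighbourhood $U\ni\zeta$ the map $T^q$ is a $C^{1+\gamma}$ diffeomorphism fixing $\zeta$ and preserving both foliations, and by the distortion bound~(\ref{distor10}) it acts in the adapted coordinates as $(x^u,x^s)\mapsto(\lambda_u x^u,\lambda_s x^s)$ up to a multiplicative error tending to $1$ as $r_n\to0$, with $\lambda_u=|DT^q_u(\zeta)|>1>|\lambda_s|$. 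Hence inside $B_{r_n}(\zeta)$ the constraints $T^{jq}x\in B_{r_n}(\zeta)$ for $1\le j\le\kappa$ collapse to $|x^u|\le(1+o(1))r_n\lambda_u^{-\kappa}$ (the stable constraints being automatic since $|\lambda_s|<1$), so $A_n^{(q,\kappa)}$ is, up to $(1+o(1))$, the strip $\{r_n\lambda_u^{-\kappa-1}<|x^u|\le r_n\lambda_u^{-\kappa},\ |x^s|\le r_n\}$. Since $\mu|_U$ disintegrates into absolutely continuous conditionals on unstable curves with densities of bounded oscillation (the distortion bound again), the density and transverse‑measure factors cancel between $A_n^{(q,\kappa)}$ and $U_n$, giving $\pi_\kappa=\theta(1-\theta)^\kappa$ with $\theta=1-1/|DT^q_u(\zeta)|$; this is exactly the computation of~\cite[\S6]{Dichotomy}. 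In particular $\mu(A_n^{(q)})/\mu(U_n)\to\theta$, which is~(\ref{theta}), and the limiting cluster‑size distribution is $G^*(k)=\theta(1-\theta)^{k-1}$.

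For $\DD'_q(u_n)$ I would split the sum at a threshold $j^*\asymp\log n$, chosen as the time the orbit of $A_n^{(q)}$ needs to leave $U$ (about $q\log(1/r_n)/\log\lambda_u$ iterates). For $q<j\le j^*$ the intersection $A_n^{(q)}\cap T^{-j}A_n^{(q)}$ is empty once $n$ is large: if $q\nmid j$, writing $j=j'q+m$ with $0<m<q$, the point $T^{j'q}x$ still sits in $U$ near $\zeta$ while $T^m$ of it sits near $T^m\zeta$, at distance $\ge\min_{1\le m<q}d(T^m\zeta,\zeta)>0$ from $\zeta$, hence outside $B_{r_n}(\zeta)$; if $j=mq$ with $m\ge2$, then as long as $T^{mq}x\in U$ the linearization gives $|(T^{mq}x)^u|=(1+o(1))\lambda_u^m|x^u|\ge(1+o(1))\lambda_u^{m-1}r_n>r_n$ (using $x\in A_n^{(q)}\Rightarrow|x^u|>(1-o(1))r_n\lambda_u^{-1}$), so $T^{mq}x\notin B_{r_n}(\zeta)$. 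For $j>j^*$ one invokes the decay estimate of the next paragraph to get $\mu(A_n^{(q)}\cap T^{-j}A_n^{(q)})\le\mu(A_n^{(q)})\mu(B_{r_n}(\zeta))+\gamma(n,j)$; then $n\sum_{j=q+1}^{\lfloor n/k\rfloor}\mu(A_n^{(q)}\cap T^{-j}A_n^{(q)})$ is bounded, using $\mu(U_n)\sim\tau/n$, by $O(1/k)$ (the main term) plus $n\mu(A_n^{(q)})\sum_{j>j^*}\vartheta^{\,j-c\log n}=o_n(1)$, so the $\limsup_n$ is $O(1/k)\to0$ as $k\to\infty$.

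The main obstacle is the uniform decay estimate that feeds both conditions: there must exist $c>0$, $\vartheta\in(0,1)$ and $C>0$, independent of $n$, such that for every event $\mathcal B$ measurable with respect to $X_t,X_{t+1},\dots$,
\[
\big|\mu\big(A_n^{(q)}\cap\mathcal B\big)-\mu\big(A_n^{(q)}\big)\,\mu(\mathcal B)\big|\ \le\ C\,\mu\big(A_n^{(q)}\big)\,\vartheta^{\max(t-c\log n,\,0)} .
\]
Granting this, $\DD_q(u_n)$ follows with $t_n\asymp C'\log n$, $C'>c$ (then $n\gamma(n,t_n)=Cn\mu(A_n^{(q)})\vartheta^{(C'-c)\log n}\to0$ and $t_n=o(n)$), and the use above in $\DD'_q(u_n)$ is justified. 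The estimate itself is the crux: because $A_n^{(q)}$, $U_n$ and their pullbacks are thin strips of measure $O(r_n)$ whose boundaries follow the stable/unstable foliation rather than being Hölder graphs, it cannot be read off from a generic decay‑of‑correlations statement; the unstable sub‑curves of $A_n^{(q)}$ have length $O(r_n)$ and therefore need $\asymp\log(1/r_n)\asymp\log n$ iterates before the growth lemma makes them equidistribute — this is the $c\log n$ warm‑up above — and exponential mixing is just enough to beat the $1/n$‑scale smallness afterwards. I would establish it by the coupling‑lemma method of~\cite{CM,CZ09}, as implemented for billiards in~\cite{FHN} (legitimate here since (\textbf{h1})--(\textbf{h4}), in particular the one‑step expansion~(\ref{step1}), yield a Young tower with exponential tails): decompose $A_n^{(q)}$ into a bounded‑distortion family of unstable curves modulo a remainder of measure $o(\mu(A_n^{(q)}))$, couple its forward iterates with the SRB measure $\mu$, and bound the residual coupling mass by $\vartheta^{\,t-c\log n}$. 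With $\DD_q(u_n)$, $\DD'_q(u_n)$ and the $\pi_\kappa$ in hand, the blocking argument of~\cite{FFT2}, using the stationarity of $\{X_n\}$, concludes that $N_n$ converges in distribution to the compound Poisson process of intensity $\theta$ with geometric multiplicity d.f.\ $G^*(k)=\theta(1-\theta)^{k-1}$.
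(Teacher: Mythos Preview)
Your overall plan---verify $\DD_q^*(u_n)$, $\DD'_q(u_n)$, and compute the cluster limits $\pi_\kappa=\theta(1-\theta)^\kappa$ via local linearization of $T^q$ at $\zeta$---matches the paper, and your short-range emptiness argument and identification of the multiplicity law are correct. The gap is in your treatment of $\DD'_q(u_n)$. You split the sum at a single threshold $j^*\asymp q\log(1/r_n)/\log\lambda_u$ (the exit time from a fixed neighbourhood $U$) and for $j>j^*$ rely on a coupling bound with warm-up $c\log n$. But nothing forces $j^*\ge c\log n$: the exit time is governed by the local expansion $\lambda_u^{1/q}\ge\Lambda$ at the periodic orbit, whereas $c$ is governed by the Growth-Lemma rate $\vartheta$, which in systems with singularities is typically much slower than $1/\Lambda$ because of fragmentation. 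On the range $j^*<j\le c\log n$ your bound reduces to $\gamma(n,j)=C\mu(A_n^{(q)})$, and then $n\mu(A_n^{(q)})\sum_{j=j^*+1}^{c\log n}1\asymp\theta\tau\,(c\log n-j^*)$, which need not be $o_n(1)$; the assertion ``$n\mu(A_n^{(q)})\sum_{j>j^*}\vartheta^{\,j-c\log n}=o_n(1)$'' therefore fails in general.

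The paper's remedy, and its main new ingredient, is to insert a third, intermediate range and apply the Growth Lemma there \emph{as a direct geometric bound on the intersection}, not merely through a decorrelation estimate. Foliating $A_n^{(q)}$ by unstable curves of length $\asymp n^{-1/2}$ gives a standard family $\cG_n$ with $\cZ(\cG_n)\le Cn^{1/2}$; combining Lemmas~\ref{lem: growth ch} and~\ref{lem: growth} yields $\mu(A_n^{(q)}\cap T^{-j}A_n^{(q)})\le\mu(A_n^{(q)})\bigl(c_1\vartheta^{j}n^{1/2}+C_z\bigr)n^{-1/2}$ for \emph{every} $j$, and summing this from $\ln\ln n$ to $(\ln n)^{1+\delta}$ and multiplying by $n$ gives $o_n(1)$ irrespective of the relation between $\vartheta$ and $\lambda_u$. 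For the far range $j\ge(\ln n)^{1+\delta}$ the paper uses a Young-Tower/Lipschitz-approximation argument (producing $\gamma(n,j)=C(n^{-2}+n^2\theta_1^{\lfloor j/2\rfloor})$) rather than coupling; your coupling estimate would be a legitimate alternative there, but the intermediate Growth-Lemma bound bridging the short and long ranges is precisely what your two-range split is missing.
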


\section{Extreme value theory scheme of proof.}

Our proofs are based on ideas from extreme value theory. Let $X_n=\phi\circ T^n$ and define $A_n^q :=\lbrace X_0>u_n,X_q\leq  u_n\rbrace$.
For $s,l \in \mathbb{N}$ and a set $B\subset M$, let
\[
\mathscr{W}_{s,l}(B)=\bigcap_{i=s}^{s+l-1} T^{-i}(B).
\]
Next we introduce two conditions following \cite{Dichotomy}.\\

\noindent {\textbf{ Condition $\DD_q(u_n)$}}: We say that $\DD_q(u_n)$ holds for the sequence $X_0,X_1,\ldots$ if, for every  $\ell,t,n\in \mathbb{N}$
\[
\left|\mu\left(A_n^q\cap
  \mathscr{W}_{t,\ell}\left(A_n^q\right) \right)-\mu\left(A_n^q\right)
  \mu\left( \mathscr{W}_{0,\ell}\left(A_n^q\right)\right)\right|\leq \gamma(q,n,t),
\]
where $\gamma(q,n,t)$ is decreasing in $t$ and  there exists a sequence $(t_n)_{n\in \mathbb{N}}$ such that $t_n=o(n)$ and
$n\gamma(q,n,t_n)\to0$ when $n\rightarrow\infty$.\\

 We consider the sequence $(t_n)_{n\in\N}$ given by condition $\DD_q(u_n)$ and let $(k_n)_{n\in\N}$ be another sequence of integers such that as $n\to\infty$,
\[
k_n\to\infty\quad \mbox{and}\quad  k_n t_n = o(n).
\]

\noindent {\textbf{ Condition $\DD'_q(u_n)$}}:  We say that $\DD'_q(u_n)$
holds for the sequence $X_0,X_1,\ldots$ if there exists a sequence $(k_n)_{n\in\N}$ as above  and such that
\[
\lim_{n\rightarrow\infty}\,n\sum_{j=1}^{\lfloor n/k_n\rfloor}\mu\left( A_n^q\cap T^{-j}\left(A_n^q\right)
\right)=0.
\]

We note that, when $q=0$, which corresponds to a non-periodic point, condition $\DD'_q(u_n)$ corresponds to condition $D'(u_n)$ from~\cite{LLR}.

Now let
\[
\theta=\lim_{n\to\infty}\theta_n=1-\lim_{n\to\infty}\frac{\mu(A^q_n)}{\mu(U_n)}.
\]

From \cite[Corollary~2.4]{FFT2}, it follows that to proof Theorem 1  it suffices to prove  conditions $\DD_q(u_n)$ and $\DD'_q(u_n)$ since if
both conditions hold then  from~\cite[Corollary~2.4]{FFT2} the limit exists and %there is $\tau$ such that
%$$\lim_{n\to\infty}n\p(X_0>u_n)=\tau$$
%then
\[
\lim_{n\to\infty}\mu(M_n\leq u_n)= e^{-\theta\tau}.
\]

To establish Theorem~\ref{thm:dichotomy4} it suffices to establish Condition $\DD'_q(u_n)$ and a property related to Condition $\DD_q(u_n)$, which we call Condition $\DD^*_q(u_n)$~\cite{FFT3} (see also the discussion in~\cite[Section 8.1]{Dichotomy}).  We state $\DD^*_q(u_n)$ for completeness, but its proof is a decay of correlations result which requires only very minor modifications to that of the
proof of Condition $\DD_q(u_n)$. We will only prove Condition $\DD_q(u_n)$ in this paper. The proof of
Condition $\DD_q(u_n)$ uses the assumption of a Young Tower to approximate the indicator function of a complicated set
by a function constant on stable manifolds, which allows a decay rate to be bounded by the $L^{\infty}$ norm of
the set rather than a H\"older norm. In fact the scheme of  the proof of Condition $\DD_q(u_n)$ is itself  somewhat standard~\cite{FHN,Dichotomy}  and the novelty of this paper is the proof of Condition $\DD'_q(u_n)$ in this setting, which uses the growth lemma of~\cite{CM} applied to unstable curves.

Let $\zeta$  be a periodic point of prime period $q$.  Recall $U_n= (X_0>u_n)$ and define  the sequence $U^{(k)} (u_n))$ of nested sets  centered at $\zeta$ given by
\begin{equation}
\label{eq:Uk-definition}
U^{(0)}(u_n)=U_n
 \quad\text{and}\quad U^{(k)}(u_n)=T^{-q}(U^{(k-1)}(u_n))\cap U_n, \quad\text{for all $k\in\mathbb{N}$.}
 \end{equation}
For $i,k,l,s\in\mathbb{N}\cup\{0\}$, we define the following sets:
\begin{equation}\label{eq:Q-definition}
Q_{q,i}^k (u_n):=T^{-i}\left(U^{(k)}(u_n)-U^{(k+1)}(u_n)\right).
\end{equation}

%Observe that for each $\kappa$, the set $Q_{q,0}^\kappa(u)$ corresponds to an annulus centered at $\zeta$.
Note that $Q_{q,0}^0(u_n)=A_n^{(q)}$.
Furthermore,
$U_n=\bigcup_{k=0}^\infty Q_{q,0}^k (u_n)$.
Using our $d$-metric the set $U_n$ centered at $\zeta$ can be decomposed into a sequence of disjoint strips where $Q_{q,0}^0(u_n)$ are the most outward strips and the inner strips $Q_{q,0}^{k+1}(u_n)$ are sent outward by $T^q$ onto the strips $Q_{q,0}^k (u_n)$, i.e.,
$T^{q}(Q_{q,0}^{k +1}(u_n))=Q_{q,0}^k (u_n).$

\begin{defn}[$D_q(u_n)^*$]\label{cond:Dp*}We say that $D_q(u_n)^*$ holds
for the sequence $X_0,X_1,X_2,\ldots$ if for any integers $t, k_1,\ldots,k_q$, $n$ and
 any $J=\cup_{j=2}^q I_j\in \bR$ with $\inf\{x:x\in J\}\ge t$,
 \[ \left| P \left(Q_{q,0}^{k_1}(u_n)\cap \left(\cap_{j=2}^q N_{n}(I_j)=k_j \right) \right)-P\left(Q_{q,0}^{k_1}(u_n)\right)
  P\left(\cap_{j=2}^q N_n (I_j)=k_j \right)\right|\leq \gamma(n,t),
\]
where for each $n$ we have that $\gamma(n,t)$ is nonincreasing in $t$  and
$n\gamma(n,t_n)\to 0$  as $n\rightarrow\infty$, for some sequence
$t_n=o(n)$.
\end{defn}

\section{Background on billiards: growth lemma and decay of correlations for hyperbolic billiards.}
The use of standard pairs of unstable and stable curves ~\cite{CM} instead of standard pairs of unstable and stable manifolds simplifies our proof of
Condition $\DD_q'(u_n)$. We now sketch some background on the use of standard pairs.
%\textbf{I removed the proofs for the original $D(u_n)$ and $D'(u_n)$ assumptions from the statement of the lemmas but kept the statements for the proof of the $\DD_q(u_n)$. I also changed the %observables to keep this consistent with the proof.}

For any $\gamma\geq \gamma_0$, let $\cH^-(\gamma)$	be the set of all  bounded real-valued  functions $\varphi\in L_{\infty}(\cM,\mu)$ such that  for any $x$ and $y$ lying on one stable curve $W^s\in \cW^s_{T}$, such that

$$\|\varphi\|^-_{\gamma}\colon = \sup_{W^s\in \cW^s_{T}}\sup_{ x, y\in W^s}\frac{|\varphi(x)-\varphi(y)|}{\dist(x,y)^{\gamma}}<\infty.$$
and
\beq \label{DHC-} 	|\varphi(x) - \varphi(y)| \leq \|\varphi\|^-_{\gamma} \dist(x,y)^{\gamma}\eeq

Similarly we define $\cH^{+}(\gamma)$ as the set of all real-valued  functions $\psi\in L_{\infty}(\cM,\mu)$  such that  for any $x$ and $y$ lying on one unstable curve $W^u \in \cW^u_{T}$ such that
$$
   \|\psi\|^+_{\gamma}\colon = \sup_{W^u\in \cW^u_{T}}\sup_{ x, y\in W^u}\frac{|\psi(x)-\psi(y)|}{\dist(x,y)^{\gamma}}<\infty,
$$
and
\beq \label{DHC+} 	
   |\psi(x) - \psi(y)| \leq \|\psi\|^+_{\gamma} \dist(x,y)^{\gamma}.
\eeq

When we study autocorrelations of certain observables, we will need to require that the latter belong to the space $\cH(\gamma)\colon =\cH^+(\gamma)\cap\cH^-(\gamma)$, i.e., they are  H\"older functions  on every stable and unstable manifold. For correlations of two distinct functions $\varphi$, $\psi$ as above we always assume that $\varphi\in \cH^-(\gamma_1)$ and $\psi\in \cH^+(\gamma_2)$ with some $\gamma_1,\gamma_2\in[\gamma_0,1]$,  unless otherwise specified.
For every $\varphi\in \cH^{\pm}(\gamma)$ we define
\beq \label{defCgamma}
   \|\varphi\|^{\pm}_{C^{\gamma}}\colon=\|\varphi\|_{\infty}+\|\varphi\|^{\pm}_{\gamma}.
\eeq

By using coupling methods, we obtain the following bounds for the rate of decay of correlations for our class of hyperbolic systems with singularities, see~\cite{CZ09}.
 \begin{comment}Let $\varphi_0, \varphi_1, \ldots, \varphi_q\in\cH^-_{\gamma_0}$, and
 $$\|\varphi_i\|_{\infty}\leq \|\varphi\|_{\infty}, \,\,\,\,\,i=1, \ldots, q$$ Consider
 the product
$\tilde{\varphi}=\varphi_0\cdot (\varphi_1\circ T^{-1})\cdots (\varphi_q\circ T^{-q}).$

Furthermore, let $\psi_0, \psi_1, ..., \psi_k\in \cH^+_{\gamma_2}$,
and $\|\psi_i\|_{\infty}\leq \|\psi\|_{\infty}$, $i=1, ..., k$. Consider
the product $\tilde{\psi}=\psi_0\cdot (\psi_1\circ T)\cdots (\psi_k\circ T^k).$
Then we can estimate the correlations between observables $\tilde{\varphi}$
and $\tilde{\psi}$.
\begin{lemma} For systems satisfy {\textbf{(h1)-  (h4)}}, there exist $C>0$, $\theta\in (0,1)$, for all $n\geq
0$,
\begin{equation*}
\bigl|\langle \tilde{\psi}\cdot(\tilde{\varphi}\circ T^n)\rangle -\langle
\tilde{\varphi}\rangle\cdot\langle\tilde{\psi}\rangle\bigr|\leq C \|\varphi\|_{\infty}^q\|\psi\|_{\infty}^k \|\varphi\|_{\gamma_1}\|\psi\|_{\gamma_2}
\theta^n,
\end{equation*}
where $\theta=\theta(\gamma_1,\gamma_2)\in (0,1)$ is a constant.
\end{lemma}
\end{comment}

\begin{lemma}\label{lemma:exp}
For systems satisfy (h1)-  (h4), there exist $C>0$, $p\in (0,1)$, such that for any
observables $\varphi\in \cH^-(\gamma_1)$ and $\psi\in\cH^+(\gamma_2)$ on $\cM$, with $\gamma_i\in [\gamma_0,1]$, for $i=1,2$,
\begin{equation}\label{thm:6} |\int \psi \cdot \varphi \circ T^n d\mu  -\int \psi d\mu \int \phi d\mu |\leq C\|\psi\|^+_{_{C^{\gamma_2}}} \|\varphi\|^-_{_{C^{\gamma_1}}} \Lambda^{-pn}\end{equation}
for any $n\geq 1$.
\end{lemma}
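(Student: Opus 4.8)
The plan is to prove \eqref{thm:6} by the coupling method for standard pairs (Chernov--Dolgopyat), exactly as carried out in~\cite{CZ09} under the hypotheses (h1)--(h4) together with the growth lemma of~\cite{CM}; I would organize the argument as follows. \textbf{Step 1: reduction to standard pairs.} Disintegrate $\mu$ along the measurable partition $\cW^u$ into conditional measures $\mu_W$ on unstable manifolds; by the SRB property and the distortion bound \eqref{distor10}, each $\mu_W$ has a density with respect to $m_W$ whose logarithm is $\gamma_0$-H\"older with a uniform constant, so $(W,\mu_W)$ is a standard pair. Given $\psi\in\cH^+(\gamma_2)$ I would write $\psi=\psi_1-\psi_2$ with $\psi_1=\psi+\|\psi\|_\infty\ge 0$ and $\psi_2\equiv\|\psi\|_\infty$; since each $\psi_i|_W$ is $\gamma_2$-H\"older with norm $\le 2\|\psi\|^+_{C^{\gamma_2}}$, the unnormalized measures $\psi_i\,d\mu$ disintegrate into families of standard pairs with uniformly controlled density regularity. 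It then suffices to compare $\int\varphi\,d(T^n_*(\psi_i\mu))$ with $\bigl(\int\psi_i\,d\mu\bigr)\int\varphi\,d\mu$ for $i=1,2$.

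\textbf{Step 2: growth and coupling.} Push the standard-pair family representing $\psi_i\mu$ forward by $T^n$. The one-step expansion \eqref{step1}, the bounded curvature and distortion bounds of (h3), and the bound \eqref{epscs11} on neighbourhoods of $S_1$ are the inputs of the growth lemma of~\cite{CM} (equivalently~\cite[Lemma 17]{CZ09}): after finitely many iterates the fragmentation produced by $S_n$ is dominated by expansion, so $T^k_*(\psi_i\mu)$ becomes a \emph{proper} family, with a definite fraction of its mass on unstable curves longer than a fixed $\delta_0$. The coupling lemma then yields a coupling of $T^n_*(\psi_i\mu)$ (normalized) with $\mu=T^n_*\mu$, i.e.\ a measure on pairs $(x,y)$ and a coupling time $\Upsilon$ such that matched points satisfy $y\in W^s(x)$, one has $\mu_{\mathrm{pair}}(\Upsilon>m)\le C\vartheta^m$ for some $\vartheta\in(0,1)$ depending only on the system, and the mass not matched by time $n$ is $\le C\vartheta^n$ in each family.

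\textbf{Step 3: the correlation estimate.} If a pair is matched at time $m\le n$, then $T^nx,T^ny$ lie on a common stable curve and, by (h1), $\dist(T^nx,T^ny)\le C\Lambda^{-(n-m)}$; since $\varphi\in\cH^-(\gamma_1)$, \eqref{DHC-} gives $|\varphi(T^nx)-\varphi(T^ny)|\le\|\varphi\|^-_{\gamma_1}\,C^{\gamma_1}\Lambda^{-\gamma_1(n-m)}$. Splitting the coupling time at $\lfloor n/2\rfloor$: pairs matched before $n/2$ contribute $\le C\|\psi\|_\infty\|\varphi\|^-_{\gamma_1}\Lambda^{-\gamma_1 n/2}$; pairs matched after $n/2$ have total mass $\le C\vartheta^{n/2}$ and, together with the residual unmatched mass, contribute $\le C(\|\varphi\|_\infty+\|\varphi\|^-_{\gamma_1})\vartheta^{n/2}$; finally the matched part of $\mu$ differs from $\mu$ by a measure of mass $\le C\vartheta^n$, which against $\varphi$ costs $\le C\|\varphi\|_\infty\vartheta^n$. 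Collecting the terms, and subtracting the resulting estimates for $i=1,2$, gives \eqref{thm:6} with $p=\tfrac12\min\{\gamma_1,\ \log\vartheta^{-1}/\log\Lambda\}$ after renaming the base of the exponential as $\Lambda$.

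\textbf{Main obstacle.} The delicate points are all in Step 2: checking that $\psi\,d\mu$ genuinely decomposes into standard pairs with \emph{uniformly} controlled density regularity — this is precisely where $\psi\in\cH^+(\gamma_2)$, rather than merely $\psi\in L^\infty$, is needed — and that the $T^n$-pushforward stays proper despite the unbounded fragmentation of unstable curves near $S_1$, which is controlled only through the growth lemma and \eqref{epscs11}, \eqref{step1}. Everything else is bookkeeping of H\"older and distortion constants. As an alternative one could instead quote the exponential-tailed Young tower supplied by the remark following (h4) and invoke Young's decay-of-correlations theorem; this yields a comparable bound but tracks the anisotropic norms $\|\cdot\|^{\pm}_{C^{\gamma}}$ less transparently, so I would keep the coupling formulation above.
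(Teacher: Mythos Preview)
Your sketch is correct and is precisely the coupling argument of~\cite{CZ09} that the paper invokes; the paper does not give its own proof of this lemma but simply quotes the result from~\cite{CZ09}, so your outline in fact supplies more detail than the text does. One tiny point: in Step~1 take $\psi_1=\psi+2\|\psi\|_\infty$ rather than $\psi+\|\psi\|_\infty$, so that $\psi_1$ is bounded \emph{below} by $\|\psi\|_\infty>0$ and $\ln(\psi_1\cdot d\mu_W/dm_W)$ is genuinely H\"older, as required by~\eqref{lnholder0}.
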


Let $C_{\br}>0$ and $\gamma_0\in (0,1)$ be the constants given in (\ref{distor10}).
Fix a large constant $C_{\bJ}>C_{\br}/(1-1/\Lambda)$.
Given an unstable curve $W$ and a probability measure $\nu$ on $W$, the pair
$(W,\nu)$ is said to be a \textit{standard pair} if $\nu$ is absolutely
continuous with respect to $\mu_W$,
such that the density function
$g_W(x):=d\nu/d\mu_W (x)$ is regular in the sense that
 \beq\label{lnholder0}
 | \ln g_W(x)- \ln g_W(y)|\leq C_{\bJ}\cdot d_W(x,y)^{\gamma_0}.
 \eeq

Iterates of standard pairs require the following extension of standard pairs, see \cite{CM, CZ09}.
Let $\cG=\{(W_{\alpha}, \nu_\alpha),\ \alpha\in \cA, \ \lambda\}$ be a family of standard pairs
equipped with a factor measure $\lambda$ on the index set $\cA$.
We call $\cG$ a {\it standard family} if the following conditions hold:
\begin{enumerate}
\item[(i)] $\cW:=\{W_{\alpha}: \alpha\in \cA\}$ is a measurable partition into unstable curves;
\item [(ii)] There is a finite Borel measure $\nu$ supported on $\cW$ such that for any measurable set $B\subset M$,
$$
\nu(B)=\int_{\alpha\in\cA} \nu_{\alpha}(B\cap W_{\alpha})\, d\lambda(\alpha).
$$
\end{enumerate}
For simplicity, we denote such a family by $\cG=(\cW, \nu)$.

Given a standard family $\cG=(\cW, \nu)=\{(W_{\alpha}, \nu_\alpha),\ \alpha\in \cA, \ \lambda\}$ and $n\geq 1$,
the forward image family $T^n\cG=(\cW_n, \nu_n)=\{(V_\beta, \nu^n_\beta), \ \beta\in \cB, \ \lambda^n\}$
is such that each $V_\beta$ is a connected component of $T^nW_\alpha\backslash S_{-n}$
for some $\alpha\in \cA$,
associated with
$\nu^n_\beta(\cdot)=T^n_*\nu_\alpha(\cdot \ |\ V_\beta)$ (where $T^n_*\nu_{\alpha}$ is the pushforward of $\nu_{\alpha}$  to $T^n W_{\alpha}$ )
 and
$d\lambda^n(\beta)=\nu_\alpha(T^{-n}V_\beta) d\lambda(\alpha)$.
It is not hard to see that $T^n\cG$ is a standard family as well (cf. \cite{CM}).

Let $\fF$ be the collection of all standard families.
We introduce a characteristic function on $\fF$
to measure the average length of unstable curves in a standard family.
More precisely,
we define a function $\cZ: \fF\to [0, \infty]$ by
\beq\label{cZ}
\cZ(\cG)=\dfrac{1}{\nu(M)} \int_{\cA} |W_{\alpha}|^{-1} \, d\lambda(\alpha), \ \  \text{for any}\  \cG\in \fF.
\eeq
where $\bs_0$ is as in  assumption (\textbf{h2}).
Let $\fF_0$ denote the class of $\cG\in \fF$ such that $\cZ(\cG)<\infty$.

It turns out that the value of $\cZ(T^n\cG)$ decreases exponentially in $n$ until it becomes small enough.
This fundamental fact, called the growth lemma, was first proved by Chernov for dispersing billiards in \cite{C99},
and later proved in \cite{CM} under Assumptions (\textbf{h1})-(\textbf{h4}).

\begin{lemma}\label{lem: growth ch}
There exist constants $c>0$, $C_z>0$, and $\vartheta\in (0,1)$ such that for any $\cG\in \fF_0$,
$$
\cZ(T^n \cG)\leq c \vartheta^{n} \cZ(\cG)+C_z, \ \ \ \text{for any} \ n\ge 0.
$$
\end{lemma}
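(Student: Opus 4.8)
The plan is to deduce the statement from a single one-step contraction estimate for standard families all of whose curves are short, and then to propagate it to an arbitrary $\cG\in\fF_0$ by a cutting argument. The one elementary fact used throughout, besides the growth-type assumption \eqref{step1}, is the monotonicity $\cZ(\cG')\ge\cZ(\cG)$ whenever $\cG'$ refines $\cG$: subdividing a curve $W_\alpha$ into sub-curves can only increase the contribution $\nu_\alpha(W_\alpha)/|W_\alpha|$ to the integral in \eqref{cZ}.

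First I would fix a small threshold $\delta_0>0$ and show: if every curve of $\cG$ has length at most $\delta_0$, then $\cZ(T\cG)\le\vartheta\,\cZ(\cG)$ for some $\vartheta<1$. Unwinding the definitions of $T\cG=(\cW_1,\nu_1)$ and of $\cZ$, and using that total mass is conserved so $\nu_1(M)=\nu(M)$, one gets (up to the null singularity sets) the identity
$$\cZ(T\cG)=\frac{1}{\nu(M)}\int_{\cA}\ \sum_{i}\frac{\nu_\alpha\bigl(T^{-1}V_i\bigr)}{|V_i|}\,d\lambda(\alpha),$$
where for each $\alpha$ the $V_i$ range over the smooth components of $TW_\alpha\setminus S_{-1}$. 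The density regularity \eqref{lnholder0} gives $\sup_{W_\alpha}g_{W_\alpha}\le e^{C_{\bJ}\delta_0^{\gamma_0}}|W_\alpha|^{-1}$, hence $\nu_\alpha(T^{-1}V_i)\le e^{C_{\bJ}\delta_0^{\gamma_0}}|T^{-1}V_i|/|W_\alpha|$; summing over $i$ and invoking the one-step expansion \eqref{step1}, which bounds $\sum_i |T^{-1}V_i|/|V_i|$ by some $\vartheta_\ast<1$ once $\delta_0$ is small, yields $\cZ(T\cG)\le e^{C_{\bJ}\delta_0^{\gamma_0}}\,\vartheta_\ast\,\cZ(\cG)$. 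Since $e^{C_{\bJ}\delta_0^{\gamma_0}}\to 1$ as $\delta_0\downarrow 0$, while along the scales realizing the $\liminf$ in \eqref{step1} the constant $\vartheta_\ast$ stays below a fixed value $<1$, a sufficiently small $\delta_0$ makes $\vartheta:=e^{C_{\bJ}\delta_0^{\gamma_0}}\vartheta_\ast<1$.

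Next I would pass to a general $\cG\in\fF_0$. Chop every curve of length $\ge\delta_0$ into consecutive pieces of length in $[\delta_0/2,\delta_0]$; the resulting family $\cG'$ still lies in $\fF_0$ (restriction preserves \eqref{lnholder0}), refines $\cG$, and satisfies $\cZ(\cG')\le\cZ(\cG)+2/\delta_0$. Applying $T$, the first step gives $\cZ(T\cG')\le\vartheta\,\cZ(\cG')$; moreover, by the choice $C_{\bJ}>C_{\br}/(1-1/\Lambda)$ combined with the distortion bound \eqref{distor10} and the $\Lambda$-contraction of $T^{-1}$ along unstable curves from (h1), the pushed-forward densities on the components of $T\cG'$ again obey \eqref{lnholder0}, so $T\cG'$ is a standard family and the procedure can be repeated. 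Iterating this chop--push--chop scheme, and noting at each stage that the new family refines $T^{k}\cG$ (so by monotonicity its $\cZ$ dominates $\cZ(T^k\cG)$), one obtains
$$\cZ(T^n\cG)\ \le\ \vartheta^{\,n}\,\cZ(\cG)+\frac{2}{\delta_0}\sum_{j\ge 0}\vartheta^{\,j}\ \le\ \vartheta^{\,n}\,\cZ(\cG)+\frac{2}{\delta_0(1-\vartheta)},$$
which is the assertion with $c=1$ and $C_z=2/\bigl(\delta_0(1-\vartheta)\bigr)$.

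The hard part is the first step, and precisely the requirement that the distortion blow-up $e^{C_{\bJ}\delta_0^{\gamma_0}}$ and the one-step expansion constant $\vartheta_\ast$ cooperate: both improve as $\delta_0\downarrow 0$, but one must use that the $\liminf$ in \eqref{step1} is a fixed number strictly below $1$ in order to conclude that some common $\delta_0$ makes their product $<1$. A secondary technical point -- and exactly the reason for imposing $C_{\bJ}>C_{\br}/(1-1/\Lambda)$ in the definition of standard pairs -- is that pushing forward by $T$ must preserve the density regularity \eqref{lnholder0}, so that the chop--push--chop iteration stays inside $\fF_0$.
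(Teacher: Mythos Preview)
The paper does not give its own proof of this lemma; it simply states the result and attributes it to Chernov~\cite{C99} and Chernov--Markarian~\cite{CM} under assumptions (\textbf{h1})--(\textbf{h4}). Your sketch is essentially the standard argument from~\cite{CM}: a one-step contraction for families of short curves derived from the one-step expansion hypothesis (\textbf{h4}) combined with the density regularity~\eqref{lnholder0}, followed by a chop--push iteration with the monotonicity $\cZ(\cG')\ge\cZ(\cG)$ under refinement. The logic is sound, including the key observations that (i) the chopped-and-pushed family at stage $n$ refines $T^n\cG$, so its $\cZ$-value dominates $\cZ(T^n\cG)$, and (ii) the constraint $C_{\bJ}>C_{\br}/(1-1/\Lambda)$ is precisely what keeps the iteration inside $\fF_0$. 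One minor wording slip: in your monotonicity remark the contribution of $W_\alpha$ to the integral in~\eqref{cZ} is $|W_\alpha|^{-1}\,d\lambda(\alpha)$, not $\nu_\alpha(W_\alpha)/|W_\alpha|$; the conclusion is unaffected.
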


\bigskip

\section{Checking condition $\DD_q(u_n)$}
%\label{sec:D}
%\textbf{This is the proof of $\DD_q(u_n)$ for Toral Automorphisms, however here we have (with the adapted metric) a rectangle instead of a ball and we no longer have uniform contraction of stable %leaves. Do you have an idea of how to prove this condition?}

 The proof of Condition $\DD_q (u_n)$ uses decay of correlations and is fairly standard. The following analysis is taken from~\cite{FHN} but is included for completeness. The proof considers  stable manifolds rather than stable curves, as well as ideas from Young's Tower construction for billiard systems~\cite{Y98}. Recall that assumptions $(\textbf{h1})-$$(\textbf{h4})$ along with the Growth Lemma imply the existence of a Young Tower with exponential tails~\cite[Lemma 17]{CZ09}.  Throughout this section, to simplify notation, $C$ will denote an unspecified constant, whose value may vary from line to line.

We now briefly the structure of  a Young Tower with exponential return time  tails for  a map $T:M\to M$
of a  Riemannian   manifold  $M$ equipped with Lebesgue measure $m$.

 A Young Tower has  a base set $\Delta_0$ with a hyperbolic product structure as in Young~\cite{Y98}
 with  an $\mathscr{L}^1(m)$ return time function $R:\Delta_0\to\mathbb{N}$.
There is a countable partition $\Lambda_{0,i}$ of $\Delta_0$ so that
$R$ is constant on each partition element $\Lambda_{0,i}$. We denote $R|_{\Lambda_{0, i}}$ by $R_i$.
The  Young Tower is defined by
\[
\Delta = \bigcup_{i\in\N, 0\le l \le R_i-1}\{ (x, l) : x\in \Lambda_{0,i }\}
\]
equipped with a  tower map $F:\Delta \to \Delta $ given by
\[
F(x, l) = \begin{cases}(x, l+1) & \mbox{ if }  x\in \Lambda_{0,i}, l<R_i-1\\
(T^{R_i}x, 0) &\mbox{ if } x\in \Lambda_{0,i}, l = R_i-1\end{cases}.
\]
We will refer to $\Delta_0:= \cup_i(\Lambda_{0,i} ,0)$ as  the  base of the tower $\Delta$  and denote
 $\Lambda_i := \Lambda_{0,i}$. Similarly we call $\Delta_l=\{(x,l): l<R(x) \}$, the $l-$th level of the tower.
A key role is played by the return map $f=T^R:\Delta_0\to\Delta_0$ by $f(x)=T^{R(x)} (x)$, which is uniformly expanding  Gibbs-Markov.

We may form  a quotiented tower (see~\cite{Y98} for details) by introducing an equivalence relation for points on the same stable manifold.
This operation helps in our decay of correlations estimates, as it allows decay rates for the indicator function of complicated sets to be estimated in the $L^{\infty}$ norm.We now  list the features  of the Tower that we will use.

There exists an invariant measure $m_0$  for $f: \Delta_0\to \Delta_0$ which has
absolutely continuous conditional measures on local unstable manifolds in $\Delta_0$, with density bounded uniformly from above and below.

There exists an $F$-invariant measure $\nu$ on $\Delta$ which is given by
$\nu (B)=\frac{m_0(F^{-l} B)}{\int_{\Lambda_0} R\,dm_0 }$
 for a measurable $B\subset \Lambda_{l}$, and extended to the entire tower $\Delta$ in the obvious way.
There is a projection $\pi:\Delta \to M$ given by $\pi(x, l) = T^l(x)$ which semi-conjugates $F$ and $T$,
so that  $\pi\circ F = T\circ\pi$.
The invariant measure  $\mu$, which is an SRB measure for $T: M\to M$,  is given by $\mu = \pi_*\nu$.
Denote by $W^s(x)$ the local stable manifold through $x$ and by $W^u (x)$ the local unstable manifold.
% i.e.\
%there exists $\varepsilon(x)>0$ and $C>0$, $0<\alpha<1$ such that
%$$
%W^s_{\varepsilon}(x)= \{ y: d(x,y)<\varepsilon, d(T^n y,T^n x)< C \alpha^n\;\forall n\ge 0\}.
%$$
 %We use  the notation $W^s_{loc}(x)$ rather than $W^s_{\varepsilon} (x)$ in  contexts where  the length of the local stable manifold is not   important. Analogously one %defines the local unstable manifold $W^u_{loc} (x)$.
Let $B(x,r)$  denote the ball of radius $r$ centered at the point $x$.
%To estimate the measures of sets in $M$, we will consider intersections of the push-forwards of partition elements of $\Lambda$ with %these sets, pull these intersections back to $\Lambda$, and sum over all push-forwards and over all partition elements of $\Lambda$

We lift a function $\phi: M \to \bR$ to $\Delta$ by defining  $\phi(x,l)=\phi(T^l x)$ (we keep the same symbol for $\phi$ on $\Delta$ and $\phi$ on $M$).

Under the assumption of exponential tails, that is if $m(R>n)=\mathcal{O}( \gamma_1^n)$ for some $0<\gamma_1<1$ then
Young shows~\cite{Y98} there exists $0<\Lambda_1<1$ such that for all Lipschitz $\phi$, $\psi$ we have
 \begin{equation}\label{Lip_Lip_decay}
 \left |\int \phi  \psi\circ T^n  d\mu -\int \phi d\mu \int \psi d\mu \right| \le C \Lambda_1^n \|\phi\|_{Lip} \|\psi \|_{Lip}
 \end{equation}
 for some constant $C$. Moreover, if  the lift of $\psi$ is constant on local stable leaves of the Young Tower,
 then
 \begin{equation}\label{Lip_infinity_decay}
 \left|\int \phi  \psi\circ T^n  \,d\mu -\int \phi \,d\mu \int \psi \,d\mu \right|
 \le C \Lambda_2^n \|\phi\|_{Lip} \|\psi \|_{\infty}.
 \end{equation}

\vspace{3mm}

 Let $D$ be a set whose boundary is piecewise smooth and finite length,  and define
\[
 H_{k}(D) = \left\{x \in D:T^{k}(W^s(x))\cap \partial D\not = \emptyset\right\}.
 \]

\begin{proposition}\label{prop:annulus1}
There exist constants $C>0$ and $0<\tau_1<1$ such that, for all $k$,
\begin{equation}\label{annulus}
 \mu(H_{ k}(D))\le  C \tau_1^{k}.
\end{equation}
\end{proposition}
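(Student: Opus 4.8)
The plan is to reduce the statement to a controlled estimate on how much mass sits near the boundary $\partial D$ once we flow stable manifolds forward by $T^k$, and then to exploit the uniform contraction of stable manifolds together with the fact (assumption (\textbf{h2}), inequality (\ref{epscs11})) that the $\varepsilon$-neighbourhood of the singularity set — and, more to the point, of any fixed piecewise-smooth finite-length curve — has $\mu$-measure $\cO(\varepsilon)$. First I would observe that $x\in H_k(D)$ means the local stable manifold $W^s(x)$, after $k$ iterations, still meets $\partial D$; since stable manifolds contract uniformly under forward iteration by a factor $\Lambda^{-1}<1$ (assumption (\textbf{h1})(2)), the image $T^k(W^s(x))$ has length at most $C\Lambda^{-k}$ (using the bounded length $|W^s|<c_M$ from (\textbf{h3})(1)). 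Hence $T^k x$ lies within distance $C\Lambda^{-k}$ of $\partial D$ along a stable curve, i.e. $T^k x$ belongs to a $C\Lambda^{-k}$-neighbourhood $N_{k}$ of $\partial D$.

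The second step is to push this back: $H_k(D)\subset T^{-k}(N_k)$, so by invariance of $\mu$,
\[
\mu(H_k(D))\le \mu\bigl(T^{-k}N_k\bigr)=\mu(N_k).
\]
Now I would invoke a ``thin annulus'' bound for $\mu$: since $\partial D$ is piecewise smooth of finite length and $\mu$ is an SRB measure with absolutely continuous conditional measures on unstable manifolds (with densities bounded above, as recorded in the Young tower description), one gets $\mu(N_\varepsilon(\partial D))\le C\varepsilon$ for any fixed curve $\partial D$, uniformly in $\varepsilon$. Applying this with $\varepsilon=C\Lambda^{-k}$ yields
\[
\mu(H_k(D))\le C\Lambda^{-k}=:C\tau_1^{k},\qquad \tau_1=\Lambda^{-1}\in(0,1),
\]
which is the claim.

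The step I expect to be the real obstacle is making the ``thin annulus'' estimate $\mu(N_\varepsilon(\partial D))\le C\varepsilon$ rigorous in the hyperbolic-with-singularities setting: one cannot simply quote (\ref{epscs11}), which is stated only for the singularity set $S_1$, and the neighbourhood $N_k$ produced in Step 1 is a \emph{stable-curve} neighbourhood of $\partial D$, not a Euclidean one, because the relevant distance is measured along $T^k(W^s(x))$. I would handle this by disintegrating $\mu$ along the unstable foliation $\cW^u$: on each unstable manifold $W$, the set $\{y\in W: W^s(y)\cap \partial D\neq\emptyset \text{ after } k \text{ steps}\}$ is covered by $\cO(1)$ intervals of length $\cO(\Lambda^{-k})$ (because $\partial D$ has finitely many smooth pieces, each crossing a family of stable manifolds transversally with bounded multiplicity, using the transversality of $\partial M$ to the cones in (\textbf{h2}) and bounded curvature in (\textbf{h3})); since the conditional densities of $\mu$ on unstable manifolds are bounded above, each such interval carries conditional measure $\cO(\Lambda^{-k})$, and integrating over the factor measure on the index set preserves the bound. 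Absolute continuity of the holonomy (\textbf{h3})(3) is what guarantees the geometric picture is uniform across nearby unstable manifolds, so that the constant $C$ does not degenerate.
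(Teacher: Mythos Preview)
Your proposal is correct and follows essentially the same approach as the paper: uniform contraction of stable manifolds forces $T^k x$ into a $C\tau_1^k$-neighbourhood of $\partial D$, then invariance of $\mu$ plus an annulus bound finishes. The paper's proof is considerably terser---it simply notes that $T^k(W^s(x))$ lies in an annulus of width $2\tau_1^k$ about $\partial D$ and says ``by invariance of $\mu$ the result follows''---so your detailed disintegration argument for the thin-annulus estimate is more than the paper supplies; also, your worry about stable-curve versus Euclidean neighbourhoods is unnecessary, since $|T^k(W^s(x))|\le C\tau_1^k$ already places $T^k x$ in the ordinary Euclidean $C\tau_1^k$-neighbourhood of $\partial D$.
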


\begin{proof}
 As a consequence of the uniform contraction of local stable manifolds, % ~\cite[(P2)]{Y98},
there exists $0<\tau_1<1$ and
$C_1>0$ such that $\dist\,(T^n(x), T^n(y))\le C_1\tau_1^n$ for all
 $y\in W^s(x).$ In particular, this implies that $|T^k(W^s(x))|\le C_1 \tau_1^k$. Therefore, for every  $x\in H_{k}(D)$,
 the stable manifold  $T^k(W^s(x))$ lies in an ``annulus'' of width $2 \tau_1^k$ around $\partial D$. By the invariance of $\mu$ the result follows.

\end{proof}

\begin{lemma}\label{lemma:dun-prelim}
Suppose
 $\Phi:M\to \bR$ is a Lipschitz map and $\Psi$ is the indicator function
 \[
\Psi:= \I_{\mathscr{W}_{0,\ell}\left(A^{(q)}_n\right)}.
 \]
Then for all $j\geq 0$, there exists $0<\Lambda_2<1$ such that
\begin{equation}
 \left|\int\Phi\,(\Psi\circ T^j)\, \text{d}\mu - \int\Phi\text{d}\mu\int\Psi\text{d}\mu\right|\le C\,\left(\|\Phi\|_\infty \,\,\tau_1^{\floor{j/2}}+\|\Phi\|_{\text{Lip}}\,\,\Lambda_2^{\floor{j/2}}\right).
\end{equation}
\end{lemma}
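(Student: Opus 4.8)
The plan is to split the time gap $j$ in half and use the two decay-of-correlations estimates \eqref{Lip_Lip_decay} and \eqref{Lip_infinity_decay} from the Young Tower, after first replacing the indicator $\Psi = \I_{\mathscr{W}_{0,\ell}(A^{(q)}_n)}$ by a function that is well-adapted to the tower (constant on local stable leaves). Concretely: write $j = 2m + (j \bmod 2)$ with $m = \floor{j/2}$, and think of $\Phi\,(\Psi\circ T^j)$ as a correlation at time $m$ between $\Phi\circ T^m$ (or $\Phi$, after absorbing $T^m$ into $\Phi$ using invariance of $\mu$) and $\Psi\circ T^m$. The key point is that the set $\mathscr{W}_{0,\ell}(A^{(q)}_n) = \bigcap_{i=0}^{\ell-1} T^{-i}(A^{(q)}_n)$ has a complicated geometry, so $\Psi$ is not Lipschitz and \eqref{Lip_Lip_decay} cannot be applied to it directly; instead we must produce a function $\tilde\Psi$ that is constant on local stable leaves of the tower, agrees with $\Psi$ outside a small-measure set, so that \eqref{Lip_infinity_decay} applies to $\Psi\circ T^m$ (which has the same $L^\infty$ norm, namely $1$).

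First I would lift everything to the tower $\Delta$ via $\pi$, and define $\tilde\Psi$ by freezing $\Psi$ along local stable manifolds: since $A^{(q)}_n = \{X_0 > u_n,\ X_q \le u_n\}$ is (up to the two pieces $B_{e^{-u_n}}(\zeta)$ and its complement under $T^{-q}$) a union of ``strips'' bounded by stable and unstable curves through $\zeta$ and its iterates, the boundary $\partial\big(\mathscr{W}_{0,\ell}(A^{(q)}_n)\big)$ is piecewise smooth of finite length, so Proposition~\ref{prop:annulus1} applies with $D = \mathscr{W}_{0,\ell}(A^{(q)}_n)$. Thus the set $H_m(D)$ of points whose stable manifold, pushed forward $m$ steps, meets $\partial D$ has measure $\le C\tau_1^m$. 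On the complement of $H_m(D)$ the function $\Psi\circ T^m$ is constant on (pushed-forward) local stable leaves, so it can be replaced by a genuinely stable-leaf-constant function $\tilde\Psi_m$ at a cost of $\|\Psi\circ T^m - \tilde\Psi_m\|_{L^1(\mu)} \le 2\mu(H_m(D)) \le C\tau_1^m$. The error incurred by this replacement inside the correlation integral is bounded by $\|\Phi\|_\infty \cdot C\tau_1^m$, which produces the first term $\|\Phi\|_\infty\,\tau_1^{\floor{j/2}}$.

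Then I would apply \eqref{Lip_infinity_decay} with the Lipschitz function $\Phi$ and the stable-leaf-constant bounded function $\tilde\Psi_m$ (suitably interpreted as an observable at time $j - m = m + (j\bmod 2) \ge m$, using stationarity to rewrite $\int \Phi\,(\tilde\Psi_m\circ T^{j-m})\,d\mu$ as a correlation at time $\ge m$): this gives a bound $C\,\Lambda_2^{m}\,\|\Phi\|_{\mathrm{Lip}}\,\|\tilde\Psi_m\|_\infty \le C\,\Lambda_2^{\floor{j/2}}\,\|\Phi\|_{\mathrm{Lip}}$, which is the second term. Adding the three contributions (the $H_m(D)$ replacement error, the decay-of-correlations bound, and the identity $\int\tilde\Psi_m\,d\mu = \int\Psi\,d\mu + O(\tau_1^m)$ used to restore the product $\int\Phi\,d\mu\int\Psi\,d\mu$) gives the claimed inequality. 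The main obstacle, and the step requiring care, is the geometric claim that $D = \mathscr{W}_{0,\ell}(A^{(q)}_n)$ has boundary of finite length so that Proposition~\ref{prop:annulus1} is legitimately applicable: one must check that intersecting the $\ell$ preimages does not create infinitely long boundary, which relies on the bounded-length/bounded-curvature regularity in (\textbf{h3}) and on $\zeta$ avoiding the singular set, and on the fact that $\ell$ is fixed (independent of $n$) when the estimate is used.
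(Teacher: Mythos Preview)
Your overall strategy---split the gap $j$ in half, replace $\Psi\circ T^{\floor{j/2}}$ by a stable-leaf-constant function, bound the replacement error by an annulus estimate, then apply Young's Lipschitz--$L^\infty$ decay \eqref{Lip_infinity_decay}---is exactly the paper's strategy. The difference lies in how you invoke Proposition~\ref{prop:annulus1}, and this is where your plan has a genuine gap.

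You apply Proposition~\ref{prop:annulus1} once, with $D=\mathscr{W}_{0,\ell}(A^{(q)}_n)$. But the constant $C$ in that proposition depends on the length of $\partial D$ (the proof bounds $\mu(H_k(D))$ by the measure of a $\tau_1^k$--annulus around $\partial D$). Since $\partial\mathscr{W}_{0,\ell}(A^{(q)}_n)\subset\bigcup_{i=0}^{\ell-1}T^{-i}(\partial A^{(q)}_n)$ and $T^{-i}$ stretches stable curves by a factor $\sim\Lambda^i$, the length of $\partial D$ can grow exponentially in $\ell$. Your final-paragraph claim that ``$\ell$ is fixed (independent of $n$) when the estimate is used'' is incorrect: condition $\DD_q(u_n)$ requires a bound $\gamma(q,n,t)$ valid for \emph{every} $\ell$, and $\gamma$ must not depend on $\ell$. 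So your bound, as stated, is $C(\ell)\,\tau_1^{\floor{j/2}}$ with $C(\ell)$ possibly $\sim\Lambda^\ell$, which is useless for the application.

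The paper sidesteps this by applying Proposition~\ref{prop:annulus1} to the \emph{simple} set $A^{(q)}_n$ (whose boundary has length bounded uniformly in $n$ and independently of $\ell$) rather than to $\mathscr{W}_{0,\ell}(A^{(q)}_n)$. The point is that if two points on the same local stable leaf disagree about membership in $\mathscr{W}_{i,\ell}(A^{(q)}_n)=\bigcap_{k=i}^{i+\ell-1}T^{-k}(A^{(q)}_n)$, then for some $k\in[i,i+\ell-1]$ the leaf crosses $T^{-k}(\partial A^{(q)}_n)$, i.e., $T^k(W^s(x))$ meets $\partial A^{(q)}_n$. Hence the bad set is contained in $\bigcup_{k=i}^{i+\ell-1}H_k(A^{(q)}_n)$, and with $i=\floor{j/2}$ one sums the geometric series
\[
\sum_{k\ge\floor{j/2}}\mu\bigl(H_k(A^{(q)}_n)\bigr)\le\sum_{k\ge\floor{j/2}}C\,\tau_1^{k}\le C'\,\tau_1^{\floor{j/2}},
\]
with $C'$ independent of $\ell$. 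This is the step you should replace in your argument; the rest of your plan then goes through.
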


\begin{proof}

We choose for reference a point  $y \in \Delta_0$, where  $\Delta_0$ is the base of a Young Tower with a
  hyperbolic product structure as in Young~\cite{Y98}.  Let  the  local unstable manifold of $y$ be denoted $\tilde\gamma^u:=W^u(y)$. By the hyperbolic product structure of $\Delta$, each local stable manifold $W^s(x)$, $x\in \Delta_0$,  intersects $\tilde\gamma^u$ in a unique point $\hat{x}$.
  For every map $\Psi$ on $\Delta$ we define the function $\overline\Psi(x,r):=\Psi(\hat x,r)$.  The function $\overline \Psi_i$ is constant along stable manifolds. Young~\cite{Y98} has shown that  if $\overline\Psi\Delta \to \bR$ is constant on stable manifolds
 then there exists a constant $C$ and $0<\Lambda_2<1$ such that
 \begin{equation}\label{Lip_infinity_decay}
 \left|\int \phi \cdot \overline \psi\circ T^n  \,d\mu -\int \phi \,d\mu \int \psi \,d\mu \right|
 \le C \Lambda_2^n \|\psi\|_{Lip} \|\overline\psi \|_{\infty}.
 \end{equation}
 for any Lipschitz function $\phi: M\rightarrow  \bR$.
%Define for a set $D \subset M$ whose boundary $\partial D$ consists of a finite set of rectifiable curves,
%\[
%H_k (D)=\{ x\in M: T^k  (W^s (x)) \cap \partial D\not = \emptyset \}
%\]

 %in $\Delta$

 Now we take  $ \Psi:= \I_{\mathscr{W}_{0,\ell}\left(A^{(q)}_n\right)}$ and $\Psi_i=\Psi\circ T^i$, with the  corresponding quotiented functions $\overline{\Psi}$ and
 $\overline{\Psi_i}$ which are constant on stable manifolds.

The set of points where $\overline \Psi_i\neq\Psi_i$ is, by definition, the set of $(x,r)$  for which there
exist  $x_1,x_2$ on the same local stable manifold as $T^r (x)$ such that
\[
x_1\in\mathscr{W}_{i,\ell}\left(A^{(q)}_n\right)
\]
 but
\[
x_2 \notin\mathscr{W}_{i,\ell}\left(A^{(q)}_n\right).
\]

This set is contained in $\cup_{k =i}^{i+\ell-1}H_k(A^{(q)}_n)$. If
we let $i\ge\floor{j/2}$ then
we  have
\[\mu(\set{\Psi_{i}\neq\overline\Psi_{i}}) \le
\sum_{k=\floor{j/2}}^{\infty} \mu(H_k(A^{(q)}_n))\le C \,\tau_1^{\floor{j/2}}.
\]
%for some $0<\tau_1 <1$.
We also have
\[
 \abs{\int\Phi\,(\,\overline{\Psi}_{\floor{j/2}}\circ T^{j -\floor{j/2}})\,d\mu - \int\Phi d\mu\int \overline\Psi_{\floor{j/2}}d\mu}\le C\,\|\Phi\|_{\text{Lip}}\,\,\|\bar\Psi\|_{\infty}\,\,\Lambda_2^{\floor{j/2}}.
\]
for any Lipschitz function $\Phi$.
%Recall,
%\[
%\abs{\int \Phi\Psi_{A+\floor{j/2}}\circ T^{j -\floor{j/2}}d\mu - \int\Phi d\nu\int \Psi_{A+\floor{j/2}}d\mu}
%= \abs{\int\tilde\Phi\tilde\Psi_{A+\floor{j/2}}\circ T^{j -\floor{j/2}}d\nu - \int\tilde\Phi d\nu\int \tilde\Psi_{A+\floor{j/2}}d\nu}
%\]
Using the identity
$$\int \phi\,(\psi\circ T)-\int \phi \int \psi = \int\phi \,(\psi\circ T-\bar{\psi}\circ T)+
\int \phi\,(\bar{\psi}\circ T) -\int \phi \int \bar{\psi} +\int \phi \int \bar{\psi} -\int \phi \int \psi$$
we obtain
\begin{align}
%\abs{\int\tilde\Phi\tilde\Psi_{A+\floor{j/2}}\circ T^{j -\floor{j/2}}d\nu - \int\tilde\Phi d\nu\int \tilde\Psi_{A+\floor{j/2}}d\nu} \nonumber\\
%\le
\Big|\int \Phi\,\left(\Psi_{\floor{j/2}}\circ T^{j -\floor{j/2}}\right)\,d\mu &- \int\Phi d\mu\int \Psi_{\floor{j/2}}\,d\mu\Big|\nonumber\\
%%
%= \abs{\int\tilde\Phi\tilde\Psi_{A+\floor{j/2}}\circ T^{j -\floor{j/2}}d\nu - \int\tilde\Phi d\nu\int \tilde\Psi_{A+\floor{j/2}}d\nu}\nonumber\\
%
&\le \abs{\int \Phi\, \left((\Psi_{\floor{j/2}} - \overline\Psi_{\floor{j/2}})\circ T^{j - \floor{j/2}}\right)\,d\mu}  +C \,\|\Phi\|_{\text{Lip}}\,\,\Lambda_2^{\floor{j/2}} \nonumber\\
&\quad + \abs{\int\Phi \,d\mu \int\left(\overline\Psi_{\floor{j/2}} - \Psi_{\floor{j/2}}\right)}%\circ T^{j - \floor{j/2}}
\,d\mu\nonumber\\
&\le C\left(2\,\|\Phi\|_\infty \,\,\mu\set{\overline\Psi_{\floor{j/2}}\neq\Psi_{\floor{j/2}}}+\|\Phi\|_{\text{Lip}}\,\,\Lambda_2^{\floor{j/2}}\right)\nonumber\\
&\le \cC\left(\|\Phi\|_\infty \,\, \tau_1^{\floor{j/2}}+\|\Phi\|_{\text{Lip}}\,\,\Lambda_2^{\floor{j/2}}\right).
\end{align}
We complete the proof by observing that $\int\Psi \,d\mu = \int\Psi_{\floor{j/2}} \,d\mu$, due to the $T$-invariance of $\mu$ and the fact that
 $\Psi_{\floor{j/2}}\circ T^{j -\floor{j/2}} = \Psi_{j} = \Psi\circ T^{j}.$
\end{proof}

To prove condition $\DD_q(u_n)$, we will approximate the characteristic function of the set $A^{(q)}_n$ by a suitable Lipschitz function. However, this Lipschitz function will decrease sharply to zero near the boundary of the set $A^{(q)}_n$. As the estimate in Lemma~\ref{lemma:dun-prelim} involves the Lipschitz norm, we need to bound its increase as we approach $\partial A^{(q)}_n$.

Let $A_n=A_n^{(q)}$ and $D_n:=\set{x\in A_n^{(q)}:\;\dist\left(x, \overline{A_n^c}\right)\geq n^{-2}},$
%$D:=\overline{B_{s-s^{1+\eta}}(\zeta)}$,
 where $\overline{A_n^c}$ denotes the closure of the complement of the set $A_n$.
Define $\Phi_n:\mathbb{X}\to\mathbb{R}$ as
\begin{equation}
\label{eq:Lip-approximation}
\Phi_n(x)=\begin{cases}
  0&\text{if $x\notin A_n$}\\
  \frac{\dist(x,A_n^c)}{\dist(x,A_n^c)+
  \dist(x,D_n)}&
  \text{if $x\in A_n\setminus D_n$}\\
  1& \text{if $x\in D_n$}
\end{cases}.
\end{equation}
Note that $\Phi_n$ is Lipschitz continuous with Lipschitz constant given by $n^2$. % $\Phi_n$ is equal to $1$ on $D_n$ and vanishes outside $A_n^{(q)}$.
Furthermore. $\|\Phi_n-\I_{A_n}\|_{L^1(\mu)}\leq C/n^2$ for some constant $C$.

It follows that
\begin{align}
\Big|\int \I_{A_n^{(q)}}\,\left(\Psi_{\floor{j/2}}\circ T^{j -\floor{j/2}}\right) &\,d\mu - \mu(A_n^{(q)})\int \Psi d\mu\Big|\nonumber\\
&\le \abs{\int \left(\I_{A_n^{(q)}} - \Phi_n\right)\Psi_{\floor{j/2}}\,d\mu}+\cC \left(\|\Phi_n \|_\infty \,\,j^2\,\,\tau_1^{\floor{j/4}}+\|\Phi_n \|_{\text{Lip}}\,\,\Lambda_2^{\floor{j/2}}\right)\nonumber\\
% continued from the previous line
&\quad+ \abs{\int \left(\I_{A_n^{(q)}} - \Phi_n \right)\,d\mu \int \Psi_{\floor{j/2}}\,d\mu},
\end{align}
and consequently
\[
 \abs{\mu\left(A_n^{(q)}\cap \mathscr W_{j,\ell}(A_n^{(q)})\right) - \mu(A_n^{(q)})\,\mu\left(\mathscr W_{0,\ell}(A_n^{(q)})\right)}\le \gamma(n,j)
\] where
\[
 \gamma(n,j) = C\,\left(n^{-2}+ n^{2} \,\theta_1^{\floor{j/2}}\right)
\] and
\[
\theta_1 = \max\,\set{\tau_1, \Lambda_2}.\]
Thus if, for instance, $j=t_n=(\ln n)^{1+\delta}$, then $n\gamma(n, t_n)\to 0$ as $n\to\infty.$  Note that we have considerable
freedom of choice of $j$ in order to ensure that the previous limit is zero; taking into account  our  future application, we chose $t_n=(\ln n)^{1+\delta}$.

\section{Checking condition $\DD_q'(u_n)$}

We now show that $\DD_q '(u_n)$ holds, which is in dynamical  applications  the more difficult condition to check.

%\begin{remark}
%	Let $A$ be a set homeomorphic to a ball whose boundary is a piecewise differentiable Jordan curve. For $\epsilon>0$, let $D_{\epsilon}:=\set{x\in \mathbb{T}%^2:\text{dist}(x,\bar{A})\le\epsilon}$ where $\text{dist}(x,A):=\inf\set{d(x,y):y\in A}$. Then $\mu(D_{\epsilon}\setminus A)=\epsilon|\partial A|+o(\epsilon)$, where $|\cdot|%$ denotes the length of the curve $\partial A$.
%\end{remark}

We consider points which leave our neighborhood after the first iterate and return at the $j$th iterate. To simplify the notation and exposition we consider  $F=T^q$, so that $\zeta$ is a fixed point for $F$.
 Clearly for large $n$, $A_n^q=\{ X_0>u_n, X_0 \circ F <u_n\}$.

Note that for any $\delta >0$
$$\lim_{n\to\infty} n\sum_{j=(\ln n)^{1+\delta}}^{\lfloor n/k_n\rfloor}\mu ( A_n^q\cap T^{-j}A_n^q)=0.$$

The estimate above is immediate from Condition $\DD_q(u_n)$, taking $\ell=1$ in the equation
\[
 \abs{\mu\left(A_n^{(q)}\cap \mathscr W_{j,\ell}(A_n^{(q)})\right) - \mu(A_n^{(q)})\,\mu\left(\mathscr W_{0,\ell}(A_n^{(q)})\right)}\le \gamma(n,j)
\]
and noting $ \gamma(n,j) = \cC\,\left(n^{-2}+ n^{2} \,\theta_1^{\floor{j/2}}\right)$.

\bigskip

For $j\in [1,(\ln n)^{1+\delta})$, we will use the Growth lemma~\ref{lem: growth ch}, which states that  for any $\cG\in \fF_0$,
$$
\cZ(T^j \cG)\leq c \vartheta^{j} \cZ(\cG)+C_z, \ \ \ \text{for any} \ j\ge 0.
$$

For any standard family $\cG=(\cW, \nu)$,
any $x\in W\in \cW$ and $n\ge 0$,
we denote by $r_{\cG, n}(x)$ the distance from $T^n x$ to $\partial W_n$ measured along $W_n$,
where $W_n$ is the open connected component of $T^n W$ whose closure contains $T^n x$.
As a result of Lemma \ref{lem: growth ch},
we have the follow fact.

\begin{lemma}[cf. \cite{CM}]\label{lem: growth}
There exists $c_1>0$ such that for any standard family $\cG=(\cW, \nu)\in \fF_0$, any $\eps>0$ and $n\ge 0$,
we have
\beq
\nu(r_{\cG, n}(x)<\eps)\le (c_1 \cZ(\cG) + C_z) \eps.
\eeq
\end{lemma}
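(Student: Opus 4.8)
The plan is to transport the estimate to the forward-iterated standard family $T^n\cG$, prove it one curve at a time there, and then close the loop with the growth lemma.

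First I would fix $\cG=(\cW,\nu)\in\fF_0$, $\eps>0$ and $n\ge 0$, and pass to the forward image $T^n\cG=(\cW_n,\nu_n)=\{(V_\beta,\nu^n_\beta),\ \beta\in\cB,\ \lambda^n\}$, which is again a standard family by the discussion preceding the statement and satisfies $\nu_n=T^n_*\nu$ and $\nu_n(M)=\nu(M)$. By the very definition of $r_{\cG,n}$, for $x\in W\in\cW$ the value $r_{\cG,n}(x)$ is the distance from $T^nx$ to $\partial V_\beta$ measured along the component $V_\beta$ of $T^nW$ containing $T^nx$. Hence, writing $E_\beta:=\{y\in V_\beta:\dist_{V_\beta}(y,\partial V_\beta)<\eps\}$ and disintegrating $\nu_n$ over $\cB$,
$$
\nu\bigl(r_{\cG,n}(x)<\eps\bigr)=\nu_n\Bigl(\bigcup_\beta E_\beta\Bigr)=\int_\cB\nu^n_\beta(E_\beta)\,d\lambda^n(\beta).
$$

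Next I would estimate $\nu^n_\beta(E_\beta)$ on a single curve. Each $(V_\beta,\nu^n_\beta)$ is a standard pair, so its density $g_\beta:=d\nu^n_\beta/d\mu_{V_\beta}$ obeys the log-H\"older bound (\ref{lnholder0}) with the fixed constant $C_{\bJ}$; since $|V_\beta|<c_M$ by (\textbf{h3})(1), this together with $\int_{V_\beta}g_\beta\,d\mu_{V_\beta}=1$ forces $g_\beta\le C_g/|V_\beta|$ with $C_g:=e^{C_{\bJ}c_M^{\gamma_0}}$, uniformly in $\beta$ and $n$. The set $E_\beta$ is the union of the two arclength-$\eps$ windows at the endpoints of $V_\beta$, so $\mu_{V_\beta}(E_\beta)\le 2\eps$, whence $\nu^n_\beta(E_\beta)\le 2C_g\eps/|V_\beta|$ (trivially $\le 1$ when $|V_\beta|\le 2C_g\eps$). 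Substituting into the display above and recalling the definition (\ref{cZ}) of $\cZ$ together with $\nu_n(M)=\nu(M)$ gives
$$
\nu\bigl(r_{\cG,n}(x)<\eps\bigr)\le 2C_g\,\eps\int_\cB|V_\beta|^{-1}\,d\lambda^n(\beta)=2C_g\,\nu(M)\,\cZ(T^n\cG)\,\eps.
$$

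Finally I would apply Lemma~\ref{lem: growth ch}, which gives $\cZ(T^n\cG)\le c\vartheta^n\cZ(\cG)+C_z\le c\,\cZ(\cG)+C_z$ since $\vartheta\in(0,1)$; taking $c_1$ to be a suitable multiple of $cC_g$ (and absorbing the multiplicative constant $2C_g\nu(M)$ into the additive term if one insists on $C_z$ exactly as displayed) yields the claimed bound $\nu(r_{\cG,n}(x)<\eps)\le(c_1\cZ(\cG)+C_z)\eps$. I do not anticipate a serious obstacle here: the only genuine point requiring care is that the class of standard pairs is preserved under forward iteration with the \emph{same} constant $C_{\bJ}$ — precisely why $C_{\bJ}>C_{\br}/(1-1/\Lambda)$ was imposed — so that the density bound $g_\beta\le C_g/|V_\beta|$ is uniform in $\beta$ and $n$; the only other thing to note is the harmless case of image curves shorter than $2C_g\eps$.
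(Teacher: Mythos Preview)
Your argument is correct and is precisely the standard derivation: the paper does not actually prove this lemma but states it as a consequence of Lemma~\ref{lem: growth ch} with a reference to \cite{CM}, and what you wrote is exactly the argument one finds there --- push forward to $T^n\cG$, use the uniform log-H\"older density bound on standard pairs to get $\nu^n_\beta(E_\beta)\le C\eps/|V_\beta|$, integrate to $C\eps\,\nu(M)\,\cZ(T^n\cG)$, and close with the growth lemma. Your caveat about the constants (the harmless multiplicative factor in front of $C_z$, and the role of $\nu(M)$ when $\nu$ is not normalized) is the only place where the statement as written is slightly loose, and you handle it appropriately.
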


For the set $A_n^q$, we foliate it into unstable curves  that stretch completely from one side to the other. Let $\{W_{\beta}, \beta\in \cA, \lambda\}$ be the foliation, and $\lambda$ the factor measure defined on the index set $\cA$.  This enables us to define a standard family, denoted as $\cG_{n}=(\cW_{n},
\mu_{n})$, where $\mu_{n}:=\mu|_{A_n^q}/\mu(A_n^q)$. Clearly, $|W_{\beta}|\sim n^{-1/2}$ this is because for large  $n$, $A_n^q$ is a strip that has length $\sim n^{-1/2}$ and width $\sim n^{-1/2}$. Also, by our assumption, the periodic point is bounded away from the singularity set of the map,   so the density of $A_n^q$ is of order $1$. Thus we obtain for $j\leq \infty$,
\begin{align*}
\cZ(\cG_{n})&=\mu(A_n^q)^{-1}\int_{\beta\in\cA}|W_{\beta}|^{-1}\,\lambda(d\beta)\\
&\le C
\mu(A_n^q)^{-1}\cdot  n^{1/2}\cdot \mu(A_n^q) \le C n^{1/2}.\end{align*}

Using  Lemma \ref{lem: growth}, we define $\eps=n^{-1/2}$, thus we have that
\begin{align}\label{estMiMk1}
\mu(T^{-j}A_n^q\cap A_n^q)&\leq \mu(A_n^q) T^j_*\mu_{n}(r<\eps)=\mu(A_n^q)\mu_n(r_{\cG_n, j}(x)<\eps)\nonumber\\
&\leq \mu(A_n^q)(c_1 \cZ(T^j\cG_n) + C_z) \eps\nonumber\\
&\leq \mu(A_n^q)(c_1 c \vartheta^{j} \cZ(\cG) + C_z) \eps\nonumber\\
&=\mu(A_n^q)(c_1 c C \vartheta^{j} n^{1/2} + C_z) n^{-1/2}.
\end{align}
%We choose $\eta>\frac{1-s_0}{2p\ln \Lambda}$, %which implies that $$\frac{1-s_0}{2}<p\eta\ln %\Lambda$$
Hence
we have
\[
\lim_{n\to \infty} n\sum_{j=\ln\ln n}^{(\ln n)^{1+\delta}}\mu((A_n^q)\cap T^{-j}(A_n^q)) \le \lim_{n\to \infty} c_2\vartheta^{\ln\ln n}=0.
\]

\bigskip

%Condition $\DD_q '(u_n)$: For the sequence $X_0,X_1,...$ if there exists a sequence $(k_n)_{n\in\bf{N}}$ satisfying $t_nk_n=o(n)$ and $k_n\rightarrow\infty$ such that
%\[
%\lim_{n\rightarrow\infty}n\sum_{j=1}^{n/k_n}\bf{P}(A_n^{q}\cap T^{-j}(A_n^q))=0
%\]
%and $A_n^q :=\lbrace X_0>u_n,X_1\le u_n,...,X_q\le u_n\rbrace$

 We consider points which leave our neighborhood after the first iterate and return at the $j$th iterate. To simplify the notation and exposition we consider  $F=T^q$, so that $\zeta$ is a fixed point for $F$.
 Clearly for large $n$, $A_n^q=\{ X_0>u_n, X_0 \circ F <u_n\}$.

  %Under condition $\DD '(u_n)$ we claim that the measure of this set of returning points converges to 0.

\begin{figure}[h!]
	\begin{center}
	\begin{minipage}{.48\textwidth}
	\mbox{
		\hspace*{3.5em}{\begin{tikzpicture}
	\begin{pgfonlayer}{main}
	\draw[shift={(0,0)},fill=white] (-1,-1) rectangle (1,1);
	\node at (-0.50,0.70) {\small $\Omega(n)$};
	\end{pgfonlayer}
	\begin{pgfonlayer}{bkgr}
	\draw[shift={(0,0)},fill={rgb:red,51;green,102;blue,204},opacity=0.2] (-2,-1) rectangle (2,1);
	\draw[shift={(0,0)}] (-1,-1) rectangle (3,1);
	\end{pgfonlayer}
	%\draw[shift={(0,0)}] (3.5,-1) rectangle (5.5,1);
	%\draw[shift = {(0,0)},fill=white,opacity=0.2] (3,-1) rectangle (3.5,1);
	\node (0,0) [below] {$\zeta$};
	\draw (0,0)--(1,0);
	\fill (0,0) circle [radius=1pt];
	\draw[<-] (1.5,-1)--(1.5,-1.5);
	%\draw[(-)] (1,-1)--(2,-1);
	\node at (1.5,-1.5) [below] {$A_1=\lbrace x\in \Omega(n), Tx\notin \Omega(n)\rbrace$};
	%\draw[(-)] (4.5,-1)--(5.5,-1);
	%\node at (5,-1) [above] {$\omega$};
	\node at (2.5,1) [above] {$TA_1$};
	%\node at (3.25,1) [above] {...};
	\end{tikzpicture}}}
	\caption{$A_1$ and $TA_1$.}
	\end{minipage}
	\begin{minipage}{.48\textwidth}
		\mbox{
			\hspace*{1.5em}{\begin{tikzpicture}[scale=1.4]
				\draw rectangle (-2,2);
				\fill (-1,1) circle [radius=1pt];
				\node at (-.8,1) {$\zeta$};
				\draw (-1.75,1.5)--(-.25,1.5);
				\draw (-1.75,.55)--(-.25,.55);
				\draw (-2.25,1)--(.5,1);
				\node at (-2.75,.90) {$W^u(\zeta)$};
				\draw (-1,2.25) |- (-1,-.25);
				\node at (-1,2.25) {$W^s(\zeta)$};
				\node at (-.5,1.75) {$\Omega(n)$};
				\end{tikzpicture}}}
				\caption{$\Omega (n)$.}
\end{minipage}
\end{center}
\end{figure}

Recall that $DF_{\zeta}^u\sim\Lambda>1$ is the expansion in the unstable direction at the fixed point $\zeta$. If $W_{\beta}(x)$ is an unstable curve  for $x\in \Omega (n)$, the set $A_u (x):=A_n^q\cap W_{\beta} (x)$ has two connected components (right and left
subintervals) of $W_{\beta(x)}\cap \Omega (n)$ which are roughly a distance $\frac{1}{ \sqrt{n}}$ from $\zeta$.

For large $n$, on $\Omega (n)$ the map $F$ acts on $W^u (\zeta)\cap \Omega (n)$ as an expansion in the unstable direction for a  number of iterates. Furthermore if $m_u$ is Lebesgue measure on $ W^u (\zeta)$,
then $m_u ((A_u (x)\cap W^u (\zeta)) \cap F^j(A_u (x)\cap W^u (\zeta) ) )=0$ for all $0<j< \eta \ln n $ for some $\eta>0$. The same holds for
$W_{\beta(x)}\cap \Omega (n)$ if $n$ is sufficiently large. We will estimate an upper bound for $\eta$ for sufficiently large $n$ by solving $\Lambda^{k}/\sqrt{n}\sim n^{-\gamma}$ for some $\gamma\in(0,\frac{1}{2})$.  We see that $\eta \sim (1/2-\gamma)/(\ln \Lambda)$ will do. Given that we are using an adapted metric this implies that $\mu ((A_u (x)) \cap F^j (A_u (x)) )=0$ for all $0<j< \eta \ln n $.

%Furthermore given the invariance of $W^u (x)$ under $F$ and the approximately linear action of $F$ in the unstable direction
%$m_u (F^j A_u (x) \cap F^i (A_u (x)))=0$ for all $0<i<j< \eta \ln n $ for some $C>0$. We will estimate an upper bound for $C$ for sufficiently large
% $n$ by solving $\lambda^{k} \frac{1}{\sqrt{n}}\sim n^{-\beta/2}$
%for a fixed $1>\beta>\eta >0$ and estimating $C\sim \frac{1-\beta}{2\ln \lambda}$ for large $n$.

%Note that if $x\in \Omega (n)\cap B(n,\eta)$ then $|F^j (A_u (x) )|\sim n^{-\eta/2}$.

%When $j<C\ln n$

%\[
%\sum_{j=1}^{C\ln(n)}\mu((A_n^q\cap B(n,\eta)) T^{-j}(A_n^q\cap B(n,\eta)))= 0
%\]
%Furthermore $(\mu B(n,\eta)^c \cap \Omega (n))\le c (n^{-1}(\ln n)^{-1-\delta})$ for some $c>0$, hence

%\[
%\sum_{j=1}^{C\ln(n)}\mu((A_n^q) T^{-j}(A_n^q))\le \frac{Cc\ln (n)}{n\ln n^{1+\delta}}
%\]

Thus
\[
\lim_{n\to \infty} n\sum_{j=1}^{\eta\ln(n)}\mu((A_n^q)\cap T^{-j}(A_n^q)) =0.
\]

Thus condition $\DD_q'(u_n)$ follows.

\section{Applications  to Sinai dispersing billiards.}
 Our main applications are to Sinai dispersing billiards with finite and infinite horizon.
 We first consider Sinai dispersing billiards with finite horizon. A Poisson law was established for hitting time statistics in the papers~\cite{FHN,Pene_Saussol}
 for generic points (periodic points were explicitly not covered by these results).

 Let $\Gamma = \set{\Gamma_i, i = 1, \dots, k}$ be a family of pairwise disjoint, simply connected $C^3$ curves with strictly positive curvature
on the two-dimensional torus $\mathbb{T}^2$.  The billiard flow  $B_t$ is the dynamical system
generated  by the motion of  a point particle in $Q= \mathbb{T}^2/(\cup_{i=1}^k (\mbox{ interior } \Gamma_i))$ with constant unit velocity inside $Q$
and with elastic reflections at $\partial Q=\cup_{i=1}^k \Gamma_i$, where elastic means ``angle of incidence equals angle of reflection''.
If each $\Gamma_i$ is a circle then this system is called a periodic Lorentz gas, a classical model of statistical  physics. The billiard
flow is Hamiltonian and preserves  a probability measure (which is Liouville measure) $\tilde{\mu}$ given by
 $d\tilde{\mu}=C_{Q}\,dq\,dt$ where $C_{Q}$ is a normalizing constant and $q\in Q$, $t\in \bR$ are Euclidean coordinates.

In this paper we consider  the billiard map  $T: \partial Q \to \partial Q$ rather than the flow.  Let $r$ be a one-dimensional coordinatization of
$\Gamma$ corresponding to length and let $n(r)$ be the outward normal to $\Gamma$ at the point $r$. For each
$r\in \Gamma$ we consider the tangent
space at $r$ consisting of unit vectors $v$ such that $(n(r),v)\ge 0$. We identify each such unit vector $v$ with an
angle $\vartheta \in [-\pi/2, \pi/2]$. The boundary $M$ is then parametrized by $M:=\partial Q=\Gamma\times [-\pi/2, \pi/2]$  so that $M$ consists of  the
points $(r,\vartheta)$. $T:M\to M$ is the Poincar\'e map that gives the position and angle $T(r,\vartheta)=(r_1,\vartheta_1)$  after a point $(r,\vartheta)$
flows under $B_t$ and collides again with $M$, according to  the rule  angle of incidence equals angle of reflection. Thus if
$ (r,\vartheta)$ is the time of flight before collision $T(r,\vartheta)=B_{h (r,\vartheta)} (r,\vartheta)$.
The billiard map preserves a probability measure $d\mu=c_r \cos \vartheta\, dr d\vartheta$ equivalent to the
$2$-dimensional Lebesgue measure $dm=dr\,d\vartheta$ with
density $\rho (x)=c_r\cos\vartheta$ where $x=(r,\vartheta)$ and $c_{r}$ is a normalizing constant to ensure that the invariant measure is a probability measure i.e. has total mass one.

 The time of flight, $h: \partial Q\to \bR$ by  $h(x,r)= \min \{ t>0: B_t (x,r) \in \partial Q \}$ is  the flow time it takes for a point on the boundary of $Q$ to return to the
 boundary.  If the time of flight $h(r,\theta)$  is uniformly bounded above then we say the billiard system has finite horizon.  Young~\cite{Y98} proved that finite horizon
 Sinai dispersing   billiard maps have exponential decay of correlations for H\"{o}lder observations.
Chernov~\cite{C99} extended this result to planar dispersing billiards with piecewise $C^3$ smooth
boundaries and where the flight time $h(x,r)$ can become singular along  a countable numbers of smooth curves.

The assumptions $\textbf{(h1)}-\textbf{(h4)}$ are satisfied by Sinai dispersing billiards with either finite or infinite horizon.

We let $\zeta$ be a periodic point of period $q$  whose trajectory does not hit a singularity curve.

We define a dynamically adapted metric $d$ on $M$ as in (\ref{defn:d}), such that $B_r(\zeta):=d^{-1}(\zeta, r)$ is a  hyperbolic d-ball with radius $r$, centered at $\zeta$.

% One can check that $\mu(B_r(\xi))\sim r^{-4}n_0^{-1}$, since the density of $\mu$ at $\xi$ is approximately $\cO(n_0^{-1})$.

We let $\phi:M\to \mathbb{R}\cup\{+\infty\}$ be given by $$\phi(z)=-\ln (d(z,\zeta))$$

Let $X_n:=\phi\circ T^n$. Since $\mu$ is invariant, so the process $\{X_n, n\geq 0\}$ is stationary.  Note that $\{X_n>u\}=T^{-n}(B_{e^{-u}}(\xi))$.  In order to obtain a nondegenerate limit for the distribution of $M_n=\max\{X_0,\cdots, X_n\}$, we choose a normalizing sequence $\{u_n\}$  as  such that  $$u_n=u_n(\tau):=\inf \{x\in M\,:\, \mu(X_0\leq x)\geq 1-\frac{\tau}{n}\}$$
for any $\tau>0$.
Then we can check that (\ref{chooseun}) holds:
\beq\lim_{n\to\infty} n \mu(X_0>u_n)=\tau
\eeq

We now define $\theta=1-\frac{1}{|DT^q_u (\zeta)I} \in (0,1)$, where $DT^q_u (\zeta)$ is the derivative if $T^q$ in the unstable direction at $\zeta$. Then one can check that
$$\theta=\lim_{n\to\infty} \mu(T^{-q} B_{e^{-u_n}}(\xi)|B_{e^{-u_n}}(\xi))$$
Then
\beq\label{theta1}
\lim_{n\to\infty} \mu(X_q> u_n| X_0>u_n)=\lim_{n\to\infty} \mu(T^{-q} B_{e^{-u_n}}(\zeta)|B_{e^{-u_n}}(\zeta))=\theta\eeq

Now we can apply our main theorem 2 to get the compound Poisson Law with parameter $\theta$ for hitting
time statistics while Theorem 1 yields a Gumbel law with extremal index $\theta$ for the maximal process
$M_n=\max \{\phi,\phi\circ T,\ldots, \phi\circ T^{n-1} \}$.

\subsection{A calculation of extremal indices in the infinite horizon case.}

Suppose $(T,X,\mu)$ is a Sinai dispersing billiard with infinite horizon. By considering the associated billiard flow we see that there are infinitely
many periodic orbits $\zeta_n$ all of  period $2$ as indicated in Figure 3. These periodic orbits limit on the grazing point $p$ in Figure 3. In this section we calculate the extremal index of $\theta_n$ of $\zeta_n$ and show $\lim_{n\to \infty} \theta_n=0$.

\begin{figure}[h!]
	\centering
	
		\includegraphics[width = \textwidth]{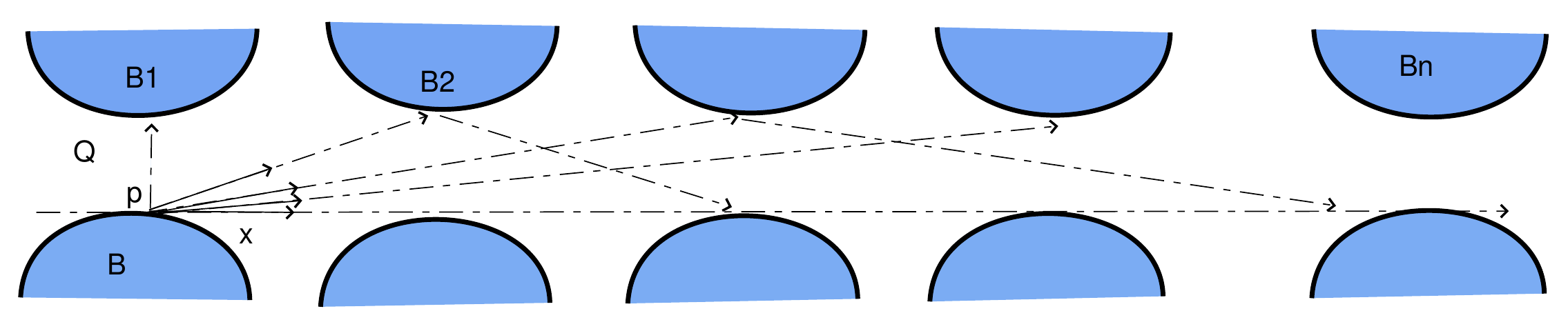}
		%\captionsetup{labelformat=empty}
		\caption{Periodic orbits in infinite horizon case.}
		%\caption{Liverani-Saussol-Vaienti Map.}
	%\captionsetup{labelformat=empty}
	
\end{figure}

Let $B^u(\zeta_n)$ be the curvature of the unstable wave front, see \cite[Chapter 3]{CM} for a precise definition. By time reversibility, we know that $B^u(\zeta_n)=B^u(T\zeta_n)$. By \cite{CM}~(3.34), writing $\zeta_n=(0,\vartheta_n)$,
$$B^u(\zeta_n)=\frac{1}{\tau(\zeta_n)+\frac{1}{R(\zeta_n)+\frac{1}{B^u(\zeta_n)}}}$$
where $\tau(\zeta_n)$ is the free path of the periodic point $\zeta_n$, and $R(\zeta_n)=\frac{2K(\zeta_n)}{\cos\vartheta_n}$ is the collision factor, $K(\zeta_n)$ is the curvature of the base point of $\zeta_n$. Solving the above equation, we get
$$B^u(\zeta_n)=R(\zeta_n)(\sqrt{1+\frac{4}{R(\zeta_n)\tau(\zeta_n)}}-1)$$

For large $n$, $R(\zeta_n)\sim n$ and $\tau(\zeta_n)\sim n$, which implies that
$$B^u(\zeta_n)=\frac{2}{\tau(\zeta_n)}+ O(n^{-2})$$

 By \cite[Theorem 3.38]{CM},
$$\log |DT_u (\zeta_n)=\frac{1}{\tau(\zeta_n)}\int_0^{\tau(\zeta_n) } B^u(\Phi^t \zeta_n)\,dt=\frac{1}{\tau(\zeta_n)}\log(1+B^u(\zeta_n\tau(\zeta_n)),$$
where \cite{CM}~(3.32) is used in  the  estimation of the last step.
By the  time reversibility property of billiards, $|DT_u^2(\zeta_n)|=|DT_u(\zeta_n)|^2$, where $DT_u (\zeta_n)$ is the derivative of $T$ in the unstable direction at $\zeta_n$.
%By the definition of Lyapunov exponent, we know that
%$$\lambda^+(\xi_n)=\lim_{n\to\infty}\frac{1}{n}\log |DT^n_u(\xi_n)|=|DT_u(\xi_n)|$$

Let $\theta_n$ be the extremal index of the periodic point $\zeta_n$ of period $2$.
Since $1-\theta_n=\frac{1}{|DT^2_u (\zeta_n)|}$ we may calculate
\begin{align*}
1-\theta_n &=(1+\tau(\zeta_n)B^u(\zeta_n))^{-\frac{2}{\tau(\zeta_n)}}\\
&=\left(1+\tau(\zeta_n)R(\zeta_n)(\sqrt{1+\frac{4}{R(\zeta_n)\tau(\zeta_n)}}-1)\right)^{-\frac{2}{\tau(\zeta_n)}}\\
&=\left(1+\frac{4}{\sqrt{1+\frac{4}{R(\zeta_n)\tau(\zeta_n)}}+1}\right)^{-\frac{2}{\tau(\zeta_n)}}\\
\end{align*}
For $n$ large enough, we have
$$\theta_n=1-3^{-\frac{2}{\tau(\zeta_n)}}+ O(n^{-\frac{1}{n}})$$
Thus $$\lim_{n\to\infty}\theta_n=0$$.
This implies that as $n$ gets larger, the extreme index of periodic point $\xi_n$ is decreasing.

\end{document}